\documentclass[reqno,a4paper,12pt]{amsart} 
\usepackage[letterpaper, margin=1.00in]{geometry}

\usepackage{amscd}
\usepackage{mathrsfs}
\usepackage[IL2]{fontenc}
\usepackage{mathtools}
\usepackage{comment} 

\usepackage{color}
\usepackage{amssymb,amsmath,amsthm,amsfonts,latexsym}
\usepackage[utf8x]{inputenc}
\usepackage{bbm} 

\usepackage{graphicx}
\usepackage[textsize=tiny, textwidth=20mm]{todonotes}
\setlength{\marginparwidth}{2cm}


\usepackage{amsmath,amsfonts,amssymb,amsthm} 
\usepackage{tikz}
\usepackage{hyperref}
\usepackage{thmtools}
\usepackage{thm-restate}

\newtheorem{theorem}{Theorem}[section]

\newtheorem*{claim*}{Claim}
\newtheorem*{theorem*}{Theorem}
\newtheorem*{definition*}{Definition}

\newtheorem{lemma}[theorem]{Lemma}
\newtheorem{remark}[theorem]{Remark}
\newtheorem{claim}[theorem]{Claim}

\newtheorem{proposition}[theorem]{Proposition}

\newtheorem{example}[theorem]{Example}
\newtheorem{definition}[theorem]{Definition}

\newcommand{\comm}[1]{}
\usepackage[capitalize, nameinlink]{cleveref}
\crefname{theorem}{Theorem}{Theorems}
\crefname{proposition}{Proposition}{Propositions}
\crefname{observation}{Observation}{Observations}
\crefname{lemma}{Lemma}{Lemmas}
\crefname{claim}{Claim}{Claims}
\crefname{problem}{Problem}{Problems}
\crefname{conjecture}{Conjecture}{Conjectures}
\crefname{question}{Question}{Questions}
\crefname{example}{Example}{Examples}
\crefname{fact}{Fact}{Facts}




\newcommand{\one}{{\bf 1}}
\newcommand{\Z}{\mathbb{Z}}
\newcommand{\N}{\mathbb{N}}
\newcommand{\T}{\mathbb{T}}
\newcommand{\Q}{\mathbb{Q}}
\newcommand{\R}{\mathbb{R}}
\newcommand{\id}{\mathrm{id}}
\renewcommand{\ae}{{\mathrm{a.e.}}}


\def\leukfrac#1/#2{\leavevmode
               \kern.1em
                \raise.9ex\hbox{\the\scriptfont0 ${}_#1$}
                \hskip -1pt\kern-.1em
                /\kern-.15em\lower.10ex\hbox{\the\scriptfont0 ${}_#2$}}

\makeatletter
\def\diam{\mathop{\operator@font diam}\nolimits}
\makeatother

\usepackage{pdflscape}

\newcommand{\mm}{\operatorname{m}}

\begin{document}
\title{Measurable tilings by abelian group actions}

	\author{Jan Greb\'ik}
	\address{University of Warwick Mathematics Institute, Coventry CV4 7AL, UK.}
	\email{jan.grebik@warwick.ac.uk}
	
	\author{Rachel Greenfeld}
	\address{Institute for Advanced Study, Princeton, NJ 08540.}
	\email{greenfeld.math@gmail.com}
	
	\author{V\'aclav Rozho\v{n}}
	\address{Department of Computer Science, ETH, Zurich, 8092, Switzerland}
	\email{rozhonv@ethz.ch}
	
	\author{Terence Tao}
	\address{UCLA Department of Mathematics, Los Angeles, CA 90095-1555.}
	\email{tao@math.ucla.edu}

\maketitle

\begin{abstract}
    Let $X$ be a measure space with a measure-preserving action $(g,x) \mapsto g \cdot x$ of an abelian group $G$.  We consider the problem of understanding the structure of measurable tilings $F \odot A = X$ of $X$ by a measurable tile $A \subset X$ translated by a finite set $F \subset G$ of shifts, thus the translates $f \cdot A$, $f \in F$ partition $X$ up to null sets.  Adapting arguments from previous literature, we establish a ``dilation lemma'' that asserts, roughly speaking, that $F \odot A = X$ implies $F^r \odot A = X$ for a large family of integer dilations $r$, and use this to establish a structure theorem for such tilings analogous to that established recently by the second and fourth authors.  As applications of this theorem, we completely classify those random tilings of finitely generated abelian groups that are ``factors of iid'', and show that measurable tilings of a torus $\T^d$ can always be continuously (in fact linearly) deformed into a tiling with rational shifts, with particularly strong results in the low-dimensional cases $d=1,2$ (in particular resolving a conjecture of Conley, the first author, and Pikhurko in the $d=1$ case).
\end{abstract}

\section{Introduction}

In this paper we establish a ``dilation lemma'' and ``structure theorem'' for abelian measurable tilings, and apply this to obtain new results on  factor-of-iid tilings, as well as measurable translational tilings of tori.

\subsection{Dilation lemmas and structure theorems for abelian tilings}

Let $G = (G,\cdot)$ be a (discrete) group.  By a  \emph{(translational) tiling} $F \odot A = G$ of $G$, we mean a pair consisting of a finite subset $F$ of $G$ and a subset $A$ of $G$ such that the translates $f \cdot A \coloneqq \{ f \cdot a: a \in A \}$ of $A$ by $F$ partition $G$.  If $G = (G,+)$ is written using additive group notation instead of multiplicative group notation, we write $F \oplus A = G$ instead of $F \odot A = G$ (and $f+A$ instead of $f \cdot A$).  For instance, we have
$$ \{0,1\}^2 \oplus (2\Z)^2 = \Z^2.$$
See for instance \cite{kol,KM} for surveys on the topic of translational tilings.

One can also consider translational tilings involving multiple pairs $F_i, A_i$.  For instance, if $G = (G,+)$ is an additive group, we write
$$ (F_1 \oplus A_1) \uplus \dots \uplus (F_k \oplus A_k) = G$$
for various finite subsets $F_1,\dots,F_k \subset G$ and $A_1,\dots,A_k \subset G$ if the translates $f_i + A_i$ for $i=1,\dots,k$ and $f_i \in F_i$ partition $G$.  Similarly, if $G$ is a multiplicative group.

In the case when the group $G$ is abelian and one is tiling by only one tile, there is a remarkable \emph{dilation phenomenon} \cite{tijdeman215decomposition,szegedy1998algorithms,iosevich2017fuglede,haessig2018tiling,kari2020algebraic,bhattacharya2020periodicity,GreenfeldTao} that asserts, roughly speaking, that the tiling $F \odot A = G$ implies the tiling $F^r \odot A = G$ for many integers $r$, where $F^r \coloneqq \{ f^r: f \in F \}$.  (Again, when the group is written additively, one would write $rF \oplus A = G$ in place of $F^r \odot A = G$.)  In \cite{GreenfeldTao} it was shown that upon averaging in $r$, this dilation invariance can be exploited to establish structural properties of such tilings.\footnote{A qualitatively similar conclusion regarding the spectral measure of a measure-preserving system associated to a tiling was obtained in \cite[Lemma 3.2]{bhattacharya2020periodicity}.}

\begin{theorem}[Structure of tilings of $\Z^d$]\label{structure-zd}  Let $d \geq 1$, and suppose that $F \oplus A = \Z^d$ for some finite set $F \subset \Z^d$ and some $A \subset \Z^d$.
\begin{itemize}
    \item[(i)] (Dilation lemma) One has $rF \oplus A = \Z^d$ whenever $r$ is a natural number coprime to all primes less than or equal to the cardinality $|F|$ of $F$.
    \item[(ii)] (Structure theorem) If we normalize $0 \in F$, then we have a decomposition
    $$ \one_A = \one_{\Z^d} - \sum_{f \in F \backslash \{0\}} \varphi_f$$
    where $\one_A$ denotes the indicator function of $A$ (thus $\one_A(x)=1$ when $x \in A$ and $\one_A(x)=0$ otherwise), and for each $f \in F \backslash \{0\}$, $\varphi_f \colon \Z^d \to [0,1]$ is a function which is $qf$-periodic (i.e., $\varphi_f(qf+ x)=\varphi_f(x)$ for every $x\in\Z^d$), where $q$ is the product of all the primes less than or equal to $2|F|$. 
\end{itemize}
\end{theorem}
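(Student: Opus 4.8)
The plan is to first establish the dilation lemma (i) by a Frobenius argument in the group algebra $\mathbb{F}_p[\Z^d]$, and then deduce the structure theorem (ii) by averaging the dilated tiling identities over a large set of admissible dilations and extracting a pointwise limit.

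For (i): by multiplicativity it suffices to treat $r=p$ a single prime with $p>|F|$, since if $p_1,\dots,p_k$ are the (possibly repeated) prime factors of $r$ then each $p_i>|F|$, and applying the single-prime case $k$ times --- using that $|p_1\cdots p_jF|=|F|$ because multiplication by a nonzero integer is injective on $\Z^d$ --- upgrades $F\oplus A=\Z^d$ to $rF\oplus A=\Z^d$. Now fix a prime $p>|F|$ and view $\one_F=\sum_{f\in F}[f]$ inside the group algebra $\mathbb{F}_p[\Z^d]$, where $[v]$ denotes the basis element at $v$ and the product is convolution, written $*$. This ring is commutative of characteristic $p$, so the Frobenius map is a ring endomorphism; applying it to $\one_F$ gives $\one_F^{*p}=\sum_{f\in F}[f]^{*p}=\sum_{f\in F}[pf]=\one_{pF}$ (the exponents $pf$ being distinct). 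Next let $\mathbb{F}_p[\Z^d]$ act by convolution on all functions $\Z^d\to\mathbb{F}_p$. The hypothesis $F\oplus A=\Z^d$ says $\one_F*\one_A=\one_{\Z^d}$ in $\mathbb{F}_p$, and since $\one_F*\one_{\Z^d}=|F|\cdot\one_{\Z^d}$ an easy induction yields $\one_F^{*p}*\one_A=|F|^{\,p-1}\one_{\Z^d}$, which equals $\one_{\Z^d}$ by Fermat's little theorem as $p\nmid|F|$. Hence $\one_{pF}*\one_A=\one_F^{*p}*\one_A=\one_{\Z^d}$ in $\mathbb{F}_p$, i.e.\ $\sum_{f\in F}\one_A(x-pf)\equiv 1\pmod p$ for every $x\in\Z^d$. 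Since that sum is an integer in $\{0,1,\dots,|F|\}$ and $p>|F|$, the congruence forces it to equal $1$; this is precisely $pF\oplus A=\Z^d$.

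For (ii): normalize $0\in F$ and let $q'$ be the product of all primes $\le|F|$, so $q'\mid q$. For $N\ge 1$ put $R_N=\{1\le r\le N:\gcd(r,q')=1\}$. By part (i), $rF\oplus A=\Z^d$ for each $r\in R_N$, so $\sum_{f\in F\setminus\{0\}}\one_A(x-rf)=1-\one_A(x)$ for all $x$ and all $r\in R_N$; averaging over $r\in R_N$ and setting $\varphi_f^{(N)}(x)=\frac{1}{|R_N|}\sum_{r\in R_N}\one_A(x-rf)\in[0,1]$ gives the exact identity $\one_A=\one_{\Z^d}-\sum_{f\in F\setminus\{0\}}\varphi_f^{(N)}$ for every $N$. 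The purpose of the averaging is near $qf$-periodicity: because $q'\mid q$, the set $\{r:\gcd(r,q')=1\}$ is invariant under $r\mapsto r+q$, so $R_N$ and $R_N-q$ differ in at most $2q$ elements, whence $|\varphi_f^{(N)}(x+qf)-\varphi_f^{(N)}(x)|\le 2q/|R_N|\to 0$ as $N\to\infty$, uniformly in $x$. Finally, the functions $\varphi_f^{(N)}$ take values in the compact set $[0,1]$ and are indexed by the countable set $\Z^d$ with $f$ in the finite set $F\setminus\{0\}$, so a diagonal argument produces a subsequence $N_j\to\infty$ along which $\varphi_f^{(N_j)}\to\varphi_f$ pointwise for each $f$; the limits $\varphi_f\colon\Z^d\to[0,1]$ are then exactly $qf$-periodic (the defect above tends to $0$) and inherit $\one_A=\one_{\Z^d}-\sum_{f\in F\setminus\{0\}}\varphi_f$, completing the proof.

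The conceptual heart is the Frobenius identity $\one_F^{*p}=\one_{pF}$ in $\mathbb{F}_p[\Z^d]$ together with the trivial-looking fact that convolving the all-ones function by $\one_F$ just multiplies it by $|F|$; given these, the mod-$p$-to-exact step in (i) and the compactness limit in (ii) are routine. The points requiring a little care are that $\one_A$ has infinite support, so one must operate with the module action of $\mathbb{F}_p[\Z^d]$ on arbitrary functions rather than inside the algebra, and that in (ii) periodicity is only approximate for each fixed $N$ and must be recovered in the limit.
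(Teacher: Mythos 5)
Your proof is correct. It is worth noting that the paper does not prove \cref{structure-zd} internally at all: it simply cites the earlier work of Greenfeld--Tao for both parts, and instead proves the measurable analogue \cref{structure-gen} by the same two-step scheme you use. Your part (i) is essentially the same Frobenius argument as in \cref{sec:dilation} (and as in the cited source), with the welcome simplification, special to $\Z^d$, that torsion-freeness rules out collisions $pf_1=pf_2$ automatically and that the pointwise integer identity $\sum_{f\in F}\one_A(x-pf)\in\{0,\dots,|F|\}$ congruent to $1$ bmod $p$ immediately forces the exact tiling, so you avoid the extra ``$\geq$ then convolve with $\one_F$ again'' step needed in the measure-theoretic setting. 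Your part (ii) genuinely diverges from the paper's method: where the paper averages over the progression $r=1+nq$ and invokes the mean ergodic theorem (and, for infinite-measure settings such as $\Z^d$ with counting measure, the measurable medial mean of \cref{app}) to obtain invariant limit functions, you average over all $r\le N$ coprime to the product $q'$ of primes up to $|F|$, observe the uniform $O(q/|R_N|)$ defect in $qf$-periodicity, and extract a pointwise limit by a diagonal/compactness argument. This is more elementary and entirely self-contained, at the cost of producing a non-canonical limit and no statement about the mean of the $\varphi_f$ --- but the statement of \cref{structure-zd}(ii) demands neither, and your periodicity modulus (a divisor of $q$) is if anything stronger than required, so the conclusion follows as stated.
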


\begin{proof}  Part (i) follows from \cite[Lemma~3.1(ii)]{GreenfeldTao}, while part (ii) follows from \cite[Theorem 1.7]{GreenfeldTao}.  The results in fact extend also to ``periodic level tilings''; see \cite{GreenfeldTao} for details.
\end{proof}

\cref{structure-zd} can then be used to obtain several new results about tilings of $\Z^d$ for low values of $d$, for instance establishing that all tilings of $\Z^2$ are weakly periodic; see \cite{GreenfeldTao} for details.

In this paper we extend the dilation and structure theorem to the context of \emph{measurable tilings}.  In this setting, we have a (discrete) group $\Gamma = (\Gamma,\cdot)$ acting on some other measure space $X = (X,{\mathcal X},\mu)$ in a measure-preserving action $\gamma \colon x \mapsto \gamma \cdot x$ for each $\gamma \in \Gamma$, thus
$$ \id_\Gamma \cdot x = x$$
for all $x \in X$ (where $\id_\Gamma$ denotes the group identity in $\Gamma$), and
$$ (\gamma \gamma') \cdot x = \gamma \cdot (\gamma' \cdot x)$$
for all $\gamma,\gamma' \in \Gamma$ and $x \in X$.  By a \emph{measurable tiling} $F \odot A =_{\ae} X$ of $X$, we mean a measurable subset $A$ of $X$ and a finite set $F$ of $\Gamma$ such that the dilates $f \cdot A \coloneqq \{ f \cdot a: a \in A \}$ of $A$ for $f \in F$ partition $X$ up to $\mu$-null sets.  Again, if the group $\Gamma$ is written additively, we write $F \oplus A$ instead of $F \odot A$.  For instance, if $\R^2$ acts on the torus $\T^2 = \R^2/\Z^2$ by translation, then we have
$$ \{0,1/2\}^2 \oplus ([0,1/2]^2 \hbox{ mod } \Z^2) =_\ae \T^2.$$

Our first result is the following analogue of \cref{structure-zd} in this setting:

\begin{theorem}[Structure of abelian measurable tilings]\label{structure-gen}  Let $\Gamma = (\Gamma,\cdot)$ be an abelian group acting on a measure space $X = (X,{\mathcal X},\mu)$ in a measure-preserving way, and suppose that $F \odot A =_\ae X$ for some finite set $F \subset \Gamma$ and some measurable $A \subset X$.
\begin{itemize}
    \item[(i)] (Dilation lemma) One has $F^r \odot A =_\ae X$ whenever $r$ is an integer number coprime to  $|F|$. 
    \item[(ii)] (Structure theorem) Suppose that $\mu(X)$ is finite.  Let $q=|F|$. Then we have a decomposition
    \begin{equation}\label{onex}
    \one_X =_\ae \sum_{f \in F} \varphi_f
    \end{equation}
    where we use $\varphi =_\ae \psi$ to denote the assertion that $\varphi,\psi$ agree $\mu$-almost everywhere, and for each $f \in F$, $\varphi_f \colon X \to [0,1]$ is a measurable function which is $f^q$-invariant up to null sets (thus $\varphi_f(f^q \cdot{-})) =_\ae \varphi_f$).  Furthermore, for all $f \in F$, we have $\int_X \varphi_f\ d\mu = \mu(A)$, and if $f^q \cdot A =_\ae A$ then $\varphi_f =_\ae \one_{f \cdot A}$.\footnote{Note that, as opposed to \cref{structure-zd}(ii), we no longer require $0\in F$ for the structure theorem \ref{structure-gen}(ii).}
\end{itemize}
\end{theorem}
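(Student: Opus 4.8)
The plan is to establish the dilation lemma (i) first and then to derive the structure theorem (ii) from it by an ergodic averaging argument; granting (i), part (ii) is almost formal.

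\emph{The dilation lemma (i).} Since every set occurring in the relations $F^r \odot A =_\ae X$ has the form $\gamma\cdot A$ with $\gamma \in \langle F\rangle$, I may assume $\Gamma = \langle F\rangle$ is finitely generated abelian. A convenient reformulation is that $F^r\odot A=_\ae X$ holds if and only if the sets $\{f^r\cdot A : f\in F\}$ are pairwise disjoint modulo null sets, because $\sum_{f\in F}\mu(f^r\cdot A)=|F|\,\mu(A)=\mu(X)$ automatically (so that, given disjointness, covering is automatic; this also forces $f\mapsto f^r$ to be injective on $F$). My main device would be to \emph{fiber the tiling over the orbits}: let $B\subseteq X$ be the measurable set of points $x$ for which $\{f\in F : f^{-1}\cdot x\in A\}$ fails to be a singleton, so $\mu(B)=0$, and let $B':=\bigcup_{\gamma\in\Gamma}\gamma\cdot B$, a $\Gamma$-invariant null set. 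For each $x\in X\setminus B'$ the tiling relation holds at every point of the orbit of $x$, so, writing $\Gamma_x$ for the stabiliser and $A_x:=\{\gamma\Gamma_x : \gamma\cdot x\in A\}\subseteq\Gamma/\Gamma_x$ (well defined), one gets an honest translational tiling $F\odot A_x=\Gamma/\Gamma_x$ of the finitely generated abelian quotient group $\Gamma/\Gamma_x$ (in particular $F$ injects into it). Applying the dilation lemma for tilings of finitely generated abelian groups to each orbit yields $F^r\odot A_x=\Gamma/\Gamma_x$ for every $r$ coprime to $|F|$; reading this back, for every $x\in X\setminus B'$ and every $y$ in the orbit of $x$ there is exactly one $f\in F$ with $y\in f^r\cdot A$, and since $X\setminus B'$ is co-null this gives $F^r\odot A=_\ae X$. (In the case $\mu(X)<\infty$ one could instead avoid fibering and directly mirror the Fourier-analytic proof behind \cref{structure-zd}, replacing the Fourier transform on $\Z^d$ by the spectral measure on $\hat\Gamma$ of the Koopman representation $U_\gamma h:=h(\gamma^{-1}\cdot{-})$ on $L^2(X,\mu)$: the relation $\sum_{f}U_f\one_A=_\ae\one_X$ forces the scalar spectral measure $\mu_{\one_A}$ of $\one_A-\tfrac1{|F|}\one_X$ to be carried by $V:=\{\chi\in\hat\Gamma:\sum_{f\in F}\chi(f)=0\}$, and then one must show that $\int\chi((f^{-1}g)^r)\,d\mu_{\one_A}$ is unchanged on replacing $r$ by $1$, for $f\ne g$ in $F$.)

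\emph{The main obstacle.} In either route the crux is the purely group-theoretic statement: if $F\oplus A=G$ for a finitely generated abelian group $G$ and $\gcd(r,|F|)=1$, then $rF\oplus A=G$. For $G=\Z^d$ this is \cref{structure-zd}(i), but only in the weaker range ``$r$ coprime to all primes $\le|F|$'', which is the best the Mann-type bound on vanishing sums of roots of unity yields. Obtaining the sharp range $\gcd(r,|F|)=1$ needs the prime-dilation theorem for group tilings --- a Frobenius-endomorphism argument in the group algebra $\mathbb{F}_p[G]$ for primes $p\nmid|F|$, in the circle of ideas of \cite{tijdeman215decomposition} and Coven--Meyerowitz, iterated multiplicatively over the prime factorisation of $r$ --- together with the extension to the torsion of $G$, which is where the ``periodic level tiling'' refinements of \cite{GreenfeldTao} come in. This sharp number-theoretic input, plus the routine-but-fiddly measure-theoretic bookkeeping (measurability of the orbit data, non-free actions, $\sigma$-finiteness), is the hard part.

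\emph{Structure theorem from the dilation lemma.} Assume $\mu(X)<\infty$, set $q=|F|$, fix $f\in F$, and let $T_f:=U_{f^q}$ be the Koopman operator of $f^q$ on $L^2(X,\mu)$ (every indicator now lies in $L^2$). For each integer $k\ge0$ we have $\gcd(1+qk,q)=1$, so (i) gives $F^{1+qk}\odot A=_\ae X$; since $f^{1+qk}=(f^q)^k\cdot f$, this reads $\sum_{f\in F}T_f^k\,\one_{f\cdot A}=_\ae\one_X$ for every $k$. Averaging over $0\le k<N$ gives $\sum_{f\in F}\bigl(\tfrac1N\sum_{k=0}^{N-1}T_f^k\one_{f\cdot A}\bigr)=_\ae\one_X$ for every $N$, and by the von Neumann mean ergodic theorem $\tfrac1N\sum_{k=0}^{N-1}T_f^k\one_{f\cdot A}$ converges in $L^2$ to $\varphi_f:=\mathbf{E}[\one_{f\cdot A}\mid\mathcal I_{f^q}]$, the conditional expectation onto the $\sigma$-algebra $\mathcal I_{f^q}$ of $f^q$-invariant measurable sets. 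Hence $\one_X=_\ae\sum_{f\in F}\varphi_f$, which is \eqref{onex}. Each $\varphi_f$ is $\mathcal I_{f^q}$-measurable, hence $f^q$-invariant up to null sets, takes values in $[0,1]$ as the conditional expectation of a $[0,1]$-valued function, and satisfies $\int_X\varphi_f\,d\mu=\int_X\one_{f\cdot A}\,d\mu=\mu(A)$. Finally, if $f^q\cdot A=_\ae A$ then $f\cdot A=_\ae f^q\cdot(f\cdot A)$ is $f^q$-invariant up to null sets, so $\one_{f\cdot A}$ is already $\mathcal I_{f^q}$-measurable and $\varphi_f=_\ae\one_{f\cdot A}$, which completes (ii).
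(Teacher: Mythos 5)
Your derivation of part (ii) from part (i) is correct and is essentially the paper's own argument: taking $r=1+qk$, rewriting $\one_{f^{1+qk}\cdot A}$ as $T_f^k\one_{f\cdot A}$, averaging, and applying the mean ergodic theorem (the paper phrases the limit via $L^1$ convergence rather than conditional expectation onto the $f^q$-invariant $\sigma$-algebra, but this is the same object), with the mean and the $f^q\cdot A=_\ae A$ case handled exactly as you do.

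The problem is part (i), which you do not actually prove: your ``main obstacle'' paragraph defers the entire content to an unproven group-theoretic dilation lemma for exact tilings of finitely generated abelian groups in the range $\gcd(r,|F|)=1$, which you yourself describe as ``the hard part.'' This is a genuine gap, and moreover the reduction you build around it (fibering over orbits, passing to quotients $\Gamma/\Gamma_x$, invoking \cref{structure-zd}(i) plus ``periodic level tiling'' refinements and Mann-type bounds) is both incomplete and unnecessary. The paper proves (i) directly in the measurable setting by a short self-contained argument: starting from $\one_F*\one_A=_\ae\one_X$ in $L^0(X)$, convolve with $p-1$ further copies of $\one_F$ (using $\one_F*\one_X=|F|\one_X$), reduce modulo a prime $p$ coprime to $|F|$ via Fermat's little theorem, apply the Frobenius identity in the group algebra ${\mathbb F}_p\Gamma$ to get $\one_F^{*p}\equiv\one_{F^p}$, hence $\one_{F^p}*\one_A=_\ae\one_X \bmod p$, and then upgrade this congruence to the exact tiling $F^p\odot A=_\ae X$ using non-negativity and one more convolution with $\one_F$ (which also rules out collisions $f_1^p=f_2^p$); no orbit decomposition, no appeal to the $\Z^d$ theorem, and no finiteness of $\mu(X)$ is needed. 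You also omit negative $r$ entirely: the statement includes, e.g., $r=-1$, which cannot be reached by iterating prime dilations and requires a separate positivity argument (as in the paper, following Tijdeman/Szegedy-type reasoning, one shows the translates $f^{-1}\cdot A$ are a.e.\ disjoint and then that $\one_F*(\one_X-\one_{F^{-1}}*\one_A)=_\ae 0$ forces equality). Finally, your preliminary reformulation that disjointness of the $f^r\cdot A$ already implies the tiling because $\sum_f\mu(f^r\cdot A)=\mu(X)$ silently assumes $\mu(X)<\infty$, which is not a hypothesis of part (i).
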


We prove part (i) of this theorem in  \cref{sec:dilation}, and part (ii) of this theorem in \cref{sec:structure}, by adapting the arguments from \cite{GreenfeldTao}.  The main ingredient in the proof of \cref{structure-gen}(i) is the Frobenius identity $(a+b)^p = a^p + b^p$, valid in any commutative ring of characteristic $p$, and part (ii) will be derived from part (i) and the mean ergodic theorem.  

The requirement that $\mu(X)$ be finite in \cref{structure-gen}(i), as well as the requirement that the action of $G$ is measure-preserving, can be relaxed, as long as we also drop the conclusion that the $\varphi_f$ have mean $\mu(A)$; see \cref{app}.  In particular, we recover the result in \cref{structure-zd}(ii) this way despite the fact that $\Z^d$ has infinite counting measure.  We also remark that a result very similar to \cref{structure-gen}(i), though using somewhat different notation, was proven in \cite[Proposition 3.1]{bhattacharya2020periodicity}.
    
Informally, \cref{structure-gen}(ii) allows one to describe sets $A$ that tile a finite measure space $X$ in terms of auxiliary functions $\varphi_f$ that enjoy some ``one-dimensional'' invariance properties.  As such, this result will be particularly useful when the space $X$ also has very low dimension, and in particular when $X$ is the unit circle $\T$ or the two-torus $\T^2$, although it also gives some non-trivial results in higher dimension.

\subsection{First application: factor of iid tilings}

We now turn to applications of \cref{structure-gen}.  We first consider a class of random tilings of a group known as \emph{factor of iid tilings}.

\begin{definition}[Factor of iid]\label{def:factor}  Let $G = (G,+)$ be a group.  For each element $x$ of $G$, let $\lambda(x)$ be an iid element of the unit interval $[0,1]$, thus the $(\lambda(x))_{x \in G}$ are jointly independent random variables, each drawn uniformly at random from $[0,1]$.  A random subset $A$ of $G$ is said to be a \emph{factor of iid process} if there exists a Borel measurable function $\Phi \colon [0,1]^G \to \{0,1\}$ such that
\begin{equation}\label{process-eq} \one_A(x_0) = \Phi\left( (\lambda(x_0+x))_{x \in G} \right)
\end{equation}
almost surely for all $x_0 \in G$. (In particular, $A$ is a stationary process.)  More generally, a finite collection $A_1,\dots,A_k$ of random subsets of $G$ is a \emph{(joint) factor of iid process} if there exist Borel measurable functions $\Phi_1,\dots,\Phi_k \colon [0,1]^G \to \{0,1\}$ such that
$$
\one_{A_i}(x_0) = \Phi_i\left( (\lambda(x_0+x))_{x \in G} \right)
$$
almost surely for all $x_0 \in G$. 

If $F_1,\dots,F_k$ are finite subsets of $G$, a \emph{factor of iid tiling} of $G$ by $F_1,\dots,F_k$ is a joint factor of iid process $A_1,\dots,A_k$ such that
$$ (F_1 \oplus A_1) \uplus \dots \uplus (F_k \oplus A_k) = G$$
almost surely.
\end{definition}

Informally, a factor of iid tiling is a tiling which is generated in a ``local'' fashion, in the sense that the behavior of the tiling sets $A_1,\dots,A_k$ in some finite region $\Omega$ of $G$ is primarily determined by the random variables $\lambda(x)$ for $x$ near $\Omega$.  We illustrate the concept with the following example\footnote{See  \cite{KS,K}, where it was shown that tilings by two tiles can model any free ergodic $\Z$ action (upto a certain entropy threshold). See also \cite{Robinson-Sahin,KQS} for  results about tiling of orbits of any free, measure preserving $\Z^d$ (or $\R^d$) actions by a fixed number of tiles (depending on $d$).}:

\begin{example}\label{23-tile}  We can generate a factor of iid tiling of the integers $\Z$ by the tiles $F_1 \coloneqq \{0,1\}$, $F_2 \coloneqq \{0,1,2\}$ by performing the following procedure.
\begin{itemize}
    \item[(i)]  First we generate jointly independent random variables $\lambda(x) \in [0,1]$ for all $x \in \Z$.
    \item[(ii)]  We construct the factor of iid process
    $$ S \coloneqq \{ x \in \Z: \lambda(x) < \lambda(x-1),\lambda(x+1) \}$$
    of ``local minima'' of $\lambda$.  We enumerate $S=\{s_n\colon n\in \Z\}$ by order (i.e., $s_n<s_m$ if $n<m$). Note that $S$ is almost surely unbounded both above and below, and is $2$-separated, in the sense that $s_{n+1} - s_n \geq 2$ for any two consecutive elements $s_n, s_{n+1}$ of $S$.
    \item[(iii)]  Using the process $S=\{s_n\colon n\in \Z\}$, we construct the factor of iid process $S'$ by 
    $$S'\coloneqq \{s_n+2j\in \Z \colon n,j\in\Z, s_n\le s_n+2j\le s_{n+1}-2\}.$$
    We enumerate $S'=\{s'_n\colon n\in \Z\}$ by order.
      Note that $S'$ is also almost surely unbounded above and below, and one has $2 \leq s'_{n+1}-s'_n \leq 3$ for any consecutive elements $s'_n, s'_{n+1}$ of $S'$. 
    \item[(iv)]  Using the process $S'=\{s'_n\colon n\in \Z\}$, we construct the joint factor of iid process $A_1, A_2$ by setting $A_1$ to consist of those elements $s'_n$ of $S'$ for which $s'_{n+1}-s'_n = 2$, and $A_2$ to consist of those elements of $s'_n$ of $S'$ for which $s'_{n+1}-s'_n = 3$.
\end{itemize}
One then easily verifies that $A_1, A_2$ is a factor of iid tiling of $\Z$ by $F_1,F_2$.

On the other hand, there is no factor of iid tiling $F_1 \oplus A_1 = \Z$ of the integers $\Z$ just by $F_1 = \{0,1\}$, due to the ``rigid'' nature of this tiling equation.  Indeed, the only possible values of $A_1$ are the even integers $2\Z$ or the odd integers $2\Z+1$.  The events $0 \in A_1$, $1 \in A_1$ then complement each other and thus must each occur with probability $1/2$ by stationarity.  On the other hand, for any integer $N$, we have $0 \in A_1$ if and only if $2N \in A_1$; sending $N \to \infty$ we conclude that $0 \in A_1$ is a tail event, contradicting the Kolmogorov zero-one law.  A similar argument shows that there is no factor of iid tiling that involves only the tile $F_2 = \{0,1,2\}$.
\end{example}

The above argument can be strengthened to show that the tiling with the two tiles $F_1, F_2$ is possible not only as a factor of iid, but even in other, more restrictive, models. 
These include so-called finitary factors of iid, finitely dependent processes, local distributed algorithms etc. \cite{FinDepCol,HolroydSchrammWilson2017FinitaryColoring,brandt_grids,GJKS,grebik_rozhon2021toasts_and_tails}.
Similarly, a tiling with just $F_1$, or just $F_2$, is not possible in any of these models. 
Our first application of \cref{structure-gen}, which we prove in \cref{sec:factor}, shows that this latter phenomenon is quite general, in that in any finitely generated abelian group $G$ there are only very few tiles $F$ that admit a factor of iid tiling:

\begin{theorem}[Tiles admitting a factor of iid tiling]\label{thm:tile-factor}  Let $G = (G,+)$ be a finitely generated abelian group, thus without loss of generality we may take $G = \Z^d \times G_0$ for some natural number $d$ and finite abelian group $G_0 = (G_0,+)$.  Let $F$ be a finite subset of $G$.  Then the following are equivalent:
\begin{itemize}
    \item[(i)]  There exists a factor of iid tiling $F \oplus A = G$ of $G$ by $F$. 
    \item[(ii)]  $F$ is of the form $F = \{x_0\} \times F_0$ for some $x_0 \in \Z^d$ and $F_0 \subset G_0$, such that $G_0$ admits a tiling $F_0 \oplus A_0 = G_0$ by $F_0$.
\end{itemize}
\end{theorem}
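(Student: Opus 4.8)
The plan is to prove the two implications separately; $(ii)\Rightarrow(i)$ is a direct construction, while the substance is in $(i)\Rightarrow(ii)$, which I will deduce from the structure theorem \cref{structure-gen}(ii) applied to the shift action of $G$ on $[0,1]^G$.

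For $(ii)\Rightarrow(i)$: writing $F=\{x_0\}\times F_0$ with $F_0\oplus A_0=G_0$ for some $A_0\subseteq G_0$, I will use the iid labels on each $\Z^d$-fibre $\{n\}\times G_0$ to select a complement of $F_0$ in $G_0$ in a $G$-equivariant fashion. Setting $g^*(n):=\operatorname{argmin}_{g\in G_0}\lambda(n,g)$ — almost surely well defined since the finitely many labels on a fibre are almost surely distinct — I will take $A:=\{(n,h)\in\Z^d\times G_0:h-g^*(n)\in A_0\}$. Since $g^*(n)+A_0$ is again a complement of $F_0$ in $G_0$, reading $F\oplus A=G$ fibrewise shows it holds; and since $\one_A(n_0,h_0)=\one_{A_0}\bigl(-\operatorname{argmin}_{g\in G_0}\lambda(n_0,h_0+g)\bigr)$, the process $A$ is a factor of iid, realized by the Borel map $\Phi(\omega):=\one_{A_0}\bigl(-\operatorname{argmin}_{g\in G_0}\omega(0,g)\bigr)$.

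For $(i)\Rightarrow(ii)$: starting from a factor of iid tiling $F\oplus A=G$ with factor map $\Phi$, I will first translate $F$ and $A$ simultaneously (which preserves the hypothesis) to normalize $0\in F$, and then pass to the dynamical model $X:=[0,1]^G$ with the product probability measure $\mu$ and the measure-preserving shift action $(w\cdot\omega)(x):=\omega(x+w)$ of $\Gamma:=G$, writing $\hat A:=\Phi^{-1}(1)\subseteq X$. The tiling constraint of $F\oplus A=G$ at the origin $0\in G$ says precisely that $\lambda$ lies in exactly one of the sets $f\cdot\hat A$, $f\in F$; by stationarity and countability of $G$ this is equivalent to $F\oplus A=G$ almost surely, and it yields both $F\odot\hat A=_\ae X$ and $\mu(\hat A)=1/|F|$ (the latter from $\sum_{f\in F}\P(-f\in A)=1$ and stationarity). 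Then I apply \cref{structure-gen}(ii) with $q:=|F|$, obtaining $\one_X=_\ae\sum_{f\in F}\varphi_f$ with each $\varphi_f\colon X\to[0,1]$ being $f^q$-invariant, $\int_X\varphi_f\,d\mu=1/|F|$, and $\varphi_f=_\ae\one_{f\cdot\hat A}$ whenever $f^q\cdot\hat A=_\ae\hat A$. The decisive dichotomy is: if $f\in F$ has nonzero $\Z^d$-component, then $f^q=qf$ is a non-torsion element of $G$, so the shift by $qf$ is mixing, hence ergodic, on the Bernoulli system $X$, forcing the invariant function $\varphi_f$ to equal $1/|F|$ almost everywhere; whereas for $f=0\in F$ one has $0^q\cdot\hat A=\hat A$, so $\varphi_0=_\ae\one_{\hat A}$ is $\{0,1\}$-valued. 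Writing $F':=F\cap(\{0\}\times G_0)$ and summing the decomposition, $\varphi_0\le\sum_{f\in F'}\varphi_f=_\ae|F'|/|F|$; if $F\ne F'$ this bound is $<1$, which forces $\varphi_0=0$ $\mu$-almost everywhere and contradicts $\int_X\varphi_0\,d\mu=1/|F|>0$. Hence $F=F'=\{0\}\times F_0$, and reading $F\oplus A=G$ fibrewise now gives $F_0\oplus A_n=G_0$ (where $A_n:=\{h\in G_0:(n,h)\in A\}$) for every fibre index $n$, so in particular $F_0$ tiles $G_0$; undoing the normalizing translation returns the asserted form $F=\{x_0\}\times F_0$ with $F_0$ tiling $G_0$.

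The weight of the argument falls on $(i)\Rightarrow(ii)$, and within it the two points requiring care are: (a) the transition from the combinatorial tiling $F\oplus A=G$, valid almost surely, to the measurable tiling $F\odot\hat A=_\ae X$ of the dynamical model — here the point is that it is enough to record the tiling constraint at the origin and then invoke stationarity; and (b) the ergodicity, in fact mixing, of the shift by a non-torsion element of $G=\Z^d\times G_0$ on $([0,1]^G,\mu)$, which follows by decomposing $G$ into the orbits of the cyclic subgroup it generates (all infinite) and recognizing the shift as a product of Bernoulli $\Z$-shifts. The degenerate case $|F|=1$, where $A=G$ deterministically, is trivial and is in any case absorbed by the above once $F$ is normalized so that $0\in F$.
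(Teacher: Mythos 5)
Your proposal is correct and follows essentially the same route as the paper: the same fibre-wise argmin construction for (ii)$\Rightarrow$(i), and for (i)$\Rightarrow$(ii) the same passage to the Bernoulli system $[0,1]^G$ with $\hat A=\Phi^{-1}(1)$, application of \cref{structure-gen}(ii), and ergodicity of the shift by a non-torsion element forcing $\varphi_f=_\ae 1/|F|$, yielding the same contradiction with the $\{0,1\}$-valued $\varphi_0=_\ae\one_{\hat A}$ of positive mean. The only cosmetic differences are that the paper justifies ergodicity via the Kolmogorov zero-one law rather than mixing of a product of Bernoulli $\Z$-shifts, and derives the contradiction from a single $f_*\notin\{0\}\times G_0$ rather than summing over all such $f$.
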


Thus for instance the only tiles $F$ that admit a factor of iid tiling of $\Z^d$ are the singleton tiles $F = \{x\}$.

\begin{remark}
After the submission of the paper, Tim Austin suggested a simpler proof of a stronger version of \cref{thm:tile-factor} saying that \emph{(i)} and \emph{(ii)} in the theorem are equivalent to the third statement:

 \emph{(iii)} Let $A$ be the stationary point process  on $G$ such that $F\oplus A=G$. If $A$ is not trivial then $A$ has positive topological entropy.

The direction ``\emph{(ii)} implies \emph{(iii)}'' is similar to our proof of ``\emph{(ii)} implies \emph{(i)}'' in \cref{sec:factor}. The direction ``\emph{(iii)} implies \emph{(ii)}'' is an immediate corollary of \cref{structure-gen}, but can also be deduced by a more elementary argument similar to the proof of \cite[Lemma 2.15]{cyr-kra}.
\end{remark}

\subsection{Second application: measurable tilings of tori}

Our second application of \cref{structure-gen} concerns measurable tilings $F \oplus A =_\ae \T^d$ of a torus $\T^d \coloneqq \R^d/\Z^d$ using the standard translation action of $\R^d = (\R^d,+)$, thus $F$ is a finite subset of $\R^d$ and $A$ is a measurable subset of $\T^d$.  We say that such a tiling is \emph{rational} if the set $F-F=\{f'-f\colon  f,f'\in F\}$ lies in $\Q^d$, that is to say that all the shifts differences $f'-f$, $f',f \in F$ have rational coordinates.  Not all measurable tilings are rational; however, our main result below shows that all measurable tilings can be continuously deformed to a rational tiling, with the results particularly strong in the low dimensional cases $d=1,2$.  More precisely, we have

\begin{theorem}[Measurable tilings of a torus]\label{thm:torus}  Let $d \geq 1$, and suppose that we have a measurable tiling $F \oplus A =_\ae \T^d$ of the $d$-torus by some finite subset $F = \{f_1,\dots,f_n\}$ of $\R^d$ and some measurable subset $A$ of $\T^d$.  Then there exists a rational tiling $F^0 \oplus A =_\ae \T^d$ of the $d$-torus by some finite subset $F^0 = \{f_1^0,\dots,f_n^0\}$ of $\Q^d$, obeying the following additional properties:
\begin{itemize}
    \item [(i)]  If we define the velocities $v_i \coloneqq f_i - f_i^0$ for $i=1,\dots,n$ and the sets $F^t = \{ f_1^0 + tv_1, \dots, f_n^0 + tv_n \}$ for all $t \in \R$, then we have $F^t \oplus A =_\ae \T^d$ for all $t \in \R$.  In particular, one can continuously (and linearly) deform the original tile set $F = F^1$ to the rational tile set $F^0$ while retaining the measurable tiling property throughout.
    \item[(ii)]  If $d=2$ and we impose the normalization $0 \in F$, then all the velocities $v_i$ are scalar multiples $v_i = \alpha_i v$ of a single  vector $v \in \Z^2$ for some real numbers $\alpha_1,\dots,\alpha_n$.  Furthermore, we can partition $F$ into subsets $F_1,\dots,F_k$ such that for each $1 \leq j \leq k$, the elements of $F_j$ have the same velocity (thus $\alpha_i = \alpha_{i'}$ whenever $f_i,f_{i'} \in F_j$), and the set $F_j \oplus A \coloneqq \{ f_i + a: f_i \in F_j, a \in A \}$ is $\R v$-invariant in the sense that $tv + F_j \oplus A =_\ae F_j \oplus A$ for every $t \in \R$.
    \item[(iii)]  If the hypotheses are as in (ii), and furthermore the tile $A$ is open and connected, then we can furthermore assume that either all the velocities $v_i$ vanish (so in particular $F=F^0$ is rational), or else for each $1 \leq j \leq m$, the set $F_j \hbox{ mod } \Z^2$ lies in a coset of $\R v \hbox{ mod } \Z^2$, and all the $F_j$ have the same cardinality.
    \item[(iv)]  If $d=1$, then $F$ is rational; in other words, we have $F = F^0 + v$ for some $v \in \R$.
\end{itemize}
\end{theorem}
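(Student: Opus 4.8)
\emph{Overall strategy.} Everything flows from \cref{structure-gen}, applied to the (measure-preserving, finite-measure) translation action of $\Gamma=(\R^d,+)$ on $X=\T^d$. Writing $n:=|F|$, part (ii) of that theorem yields a decomposition $\one_{\T^d}=_\ae\sum_{i=1}^n\varphi_{f_i}$ with each $\varphi_{f_i}\colon\T^d\to[0,1]$ measurable, $nf_i$-invariant, and of integral $\mu(A)=1/n$; moreover, after the harmless normalization $0\in F$, the last clause (with $f=0$, since $0\cdot A=A$) gives $\varphi_0=_\ae\one_A$. The routine first step is to upgrade the invariance: the translations fixing a given $L^2$ function form a closed subgroup of $\T^d$, so $\varphi_{f_i}$ is invariant under the closed subgroup $H_i\le\T^d$ generated by the image of $nf_i$, whose identity component is a rational subtorus; equivalently $\widehat{\varphi_{f_i}}(k)=0$ unless $nk\cdot f_i\in\Z$.

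\emph{Part (iv).} This is immediate. If some $f_i\notin\Q$ then $nf_i\notin\Q$ and rotation of $\T^1$ by $nf_i$ is ergodic, so $\varphi_{f_i}$ equals its mean $1/n$ a.e. Restricting $\sum_j\varphi_{f_j}=_\ae1$ to the positive-measure set $A$, where $\varphi_0=1$, $\varphi_{f_i}=1/n$, and all $\varphi_{f_j}\ge0$, forces $1\ge1+\tfrac1n$, a contradiction. Hence (after normalization) $F\subset\Q$, i.e. $F-F\subset\Q$, which is the assertion with $F^0=F-f_1$ and $v=f_1$ before normalizing.

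\emph{Parts (i)--(iii).} For (i) one must convert the analytic output into an actual family of tilings. Let $L:=\operatorname{supp}\widehat{\one_A}\subset\Z^d$; in Fourier form the tiling says $\widehat{\one_A}(k)\sum_j e^{-2\pi i k\cdot f_j}=0$ for $k\ne0$. Feeding in the dilation lemma \cref{structure-gen}(i) for all integers $r$ coprime to $n$, together with the decomposition $\one_{\T^d}=\sum_j\varphi_{f_j}$, constrains the numbers $k\cdot f_j$ for $k\in L$; the consequence one extracts --- through the Frobenius identity behind the dilation lemma, i.e. a vanishing-sums-of-roots-of-unity analysis --- is that $k\cdot(f_j-f_{j'})\in\Q$ for all $k\in L$ and all $j,j'$. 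Granting this, one chooses rational vectors $f_j^0\in\Q^d$ (using that $L^\perp$ is a rational subspace, and the normalization) so that, with $v_j:=f_j-f_j^0$, the quantity $k\cdot v_j$ is independent of $j$ for every $k\in L$. Since $\widehat{\one_A}$ is supported on $L$, the Fourier form of the tiling equation for $F^t=\{f_j^0+tv_j\}$ becomes $\widehat{\one_A}(k)\,e^{-2\pi i tc(k)}\sum_j e^{-2\pi i k\cdot f_j^0}=0$, which holds for all $t$ once it holds for one; it holds at $t=1$ (the given tiling), hence for all $t$, and $t=0$ gives the rational tiling $F^0\oplus A=_\ae\T^d$. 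For (ii) ($d=2$), the relation $k\cdot(v_j-v_{j'})=0$ on $L$ forces a dichotomy: either $\operatorname{span}_\R L=\R^2$ and all $v_j=0$, or $\operatorname{span}_\R L$ is a line, $\one_A$ (hence $A$) is invariant under the complementary rational line $\R v$ with $v\in\Z^2$ primitive, and all $v_j$ are scalar multiples of $v$; grouping the $f_j$ by velocity into blocks $F_1,\dots,F_k$, each $F_j\oplus A$ is a union of translates of the $\R v$-invariant set $A$, hence $\R v$-invariant. For (iii) one adds the topological input: an open connected $\R v$-invariant set is the preimage of an open arc $U\subset\T^2/\R v\cong\T^1$ (or all of $\T^2$, the trivial case $n=1$); pushing the tiling down to the circle, the $n$ arcs $\bar f_j+U$ of common length $1/n$ must partition $\T^1$, so the $\bar f_j$ are distinct and equally spaced, which --- after, if necessary, re-adjusting the rational representatives --- yields the stated dichotomy.

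\emph{Main obstacle.} The crux is the extraction of the rationality $k\cdot(f_j-f_{j'})\in\Q$ for $k\in L$ in step (i). Mere vanishing of the exponential sums $\sum_j e^{-2\pi i rk\cdot f_j}$ is not enough (for $n$ not squarefree one can arrange such vanishing with irrational ratios); one genuinely needs the honest-tiling structure --- that $\one_A$ is $\{0,1\}$-valued and its translates partition $\T^d$ --- channelled through \cref{structure-gen}, which is precisely what closes the gap. Relative to this, the case analysis in (ii) and the endpoint-counting on $\T^1$ feeding the connectedness argument of (iii) are comparatively soft.
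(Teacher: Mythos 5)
Your part (iv) is fine and is essentially the paper's argument (the structure theorem plus ergodicity of an irrational rotation, i.e.\ \cref{init}(i)). The serious problem is the ``main obstacle'' step on which your (i)--(iii) all rest: the claim that $k\cdot(f_j-f_{j'})\in\Q$ for every $k\in L=\operatorname{supp}\widehat{\one_A}$ and all $j,j'$ is simply false, and the paper's own examples refute it. Take \cref{ex:disconnected}'s companion Example 1.11: $A=(0,1/2)^2 \hbox{ mod } \Z^2$, $F=\{(0,0),(1/2,0),(\alpha,1/2),(\alpha+1/2,1/2)\}$ with $\alpha$ irrational. Here $k=(1,1)$ lies in $L$ (both coordinates odd, so $\widehat{\one_A}(k)\neq 0$), yet $k\cdot\bigl((\alpha,1/2)-(0,0)\bigr)=\alpha+1/2\notin\Q$; the tiling identity survives because the exponential sum vanishes by cancellation \emph{within} the group $\{(\alpha,1/2),(\alpha+1/2,1/2)\}$, not because of any rationality over all of $L$. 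The same failure occurs for \cref{three-dim} with $k=(1,1,0)$. Consequently there is in general no choice of rational $f_j^0$ making $k\cdot v_j=c(k)$ independent of $j$ on $L$ (in the 3D example $k\cdot v_j$ takes the values $\alpha,\gamma,0$ for $k=(1,1,0)$), so your Fourier identity for $F^t$ does not follow; the true structure is that for each $k$ the partial sums over velocity classes vanish separately, which is exactly what needs to be proved, not assumed. The downstream steps inherit the error: in (ii) your dichotomy concludes that $\one_A$ itself is $\R v$-invariant, which the square tile above already contradicts (only $F_j\oplus A$ is invariant, as the theorem states), and since $\operatorname{span}_\R L=\R^2$ there your dichotomy would force all $v_j=0$, impossible because $F\not\subset\Q^2$; and (iii)'s ``arcs of length $1/n$ partition $\T^1$'' argument presupposes that false invariance of $A$ and in any case does not engage with the actual difficulty (the fiberwise mass $\psi(y)=\int\one_A(x,y)\,dx$ is a priori only a nonnegative function with connected support, not an indicator).

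For comparison, the paper closes these gaps differently. For (i) it never extracts pointwise rationality from Fourier supports: it considers the closed set $\mathcal{T}(A)\subset(\T^d)^n$ of tiling tuples, uses the dilation lemma to place the whole arithmetic orbit $(nq+1)\tilde f$ inside it, and identifies the orbit closure as $\tilde f+H$ for a closed subgroup $H$; the classification of compact abelian Lie groups then yields a rational point $\tilde f^0$ and a subtorus $H^\circ$ along which one slides linearly. For (ii) the key is \cref{parallel} (all irrational shifts have parallel ``rationality directions''), proved either by the difference-operator/cocycle argument or by the trigonometric-polynomial argument, applied to the invariant functions $\varphi_f$ from \cref{structure-gen}(ii) rather than to $\widehat{\one_A}$; the $\R v$-invariance of each $F_j\oplus A$ then comes from ergodicity of the irrational directions. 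For (iii) the essential new input is \cref{connected-tiling}: a tiling of an interval by a nonnegative function of connected support forces that function to be $\frac1m\one_{[c,c']}$, which is what produces the arithmetic-progression structure and the equal cardinalities of the $F_j$. If you want to salvage a Fourier route for (i), you would need to prove the grouped vanishing statement (vanishing of the partial exponential sums within each velocity class) rather than a global rationality of $k\cdot(f_j-f_{j'})$ on $\operatorname{supp}\widehat{\one_A}$.
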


See \cref{fig:two_examples} for examples of tilings in cases (ii) and (iii). 
Informally, one can ``slide'' any measurable tiling of a torus by a single tile $A$ into a rational tiling by assigning each copy $f_i+A$ of the tile a constant velocity $v_i$ and propagating the tile backwards in time by one unit.  In two dimensions (with the normalization $0 \in F$) one can make the velocities parallel, and if the tile is additionally open and connected the tiling is either rational to begin with, or one can slide individual ``rows'' of the tiling separately.  Finally, we show that in one dimension the tiling is always rational. This gives a positive answer to a conjecture from \cite[Section 6]{conley_grebik_pikhurko2020divisibility_of_spheres}.  This conjecture can also be resolved by adapting arguments in \cite{leptin-muller,Lagarias-Wang}; see \cref{LW-remark}.

We prove parts (i), (ii), (iii), (iv) of \cref{thm:torus} in \cref{sec:highdim}, \cref{sec:twodim}, \cref{sec:connected}, and \cref{sec:onedim} respectively.  

We illustrate \cref{thm:torus} with some simple examples in dimensions one, two, and three:

\begin{example}[One dimension]
Let $d=1$ and $A \coloneqq [0,1/2] \hbox{ mod } \Z$.  Then a measurable tiling $F \oplus A =_\ae \T$ of $A$ necessarily takes the form $F = \{ v, m+1/2+v\}$ for some real number $v$ and integer $m$.  If we then take $F^0 \coloneqq \{ 0, m+1/2\}$, we see that $F = F^0 + v$ is a translate of the  set $F^0 \subset \Q$, thus rational, and that the other translates $F^t = F^0 + tv$ also give a measurable rational tiling: $F^t \oplus A =_\ae \T$.
\end{example}

\begin{figure}
    \centering
    \includegraphics[width = .9\textwidth]{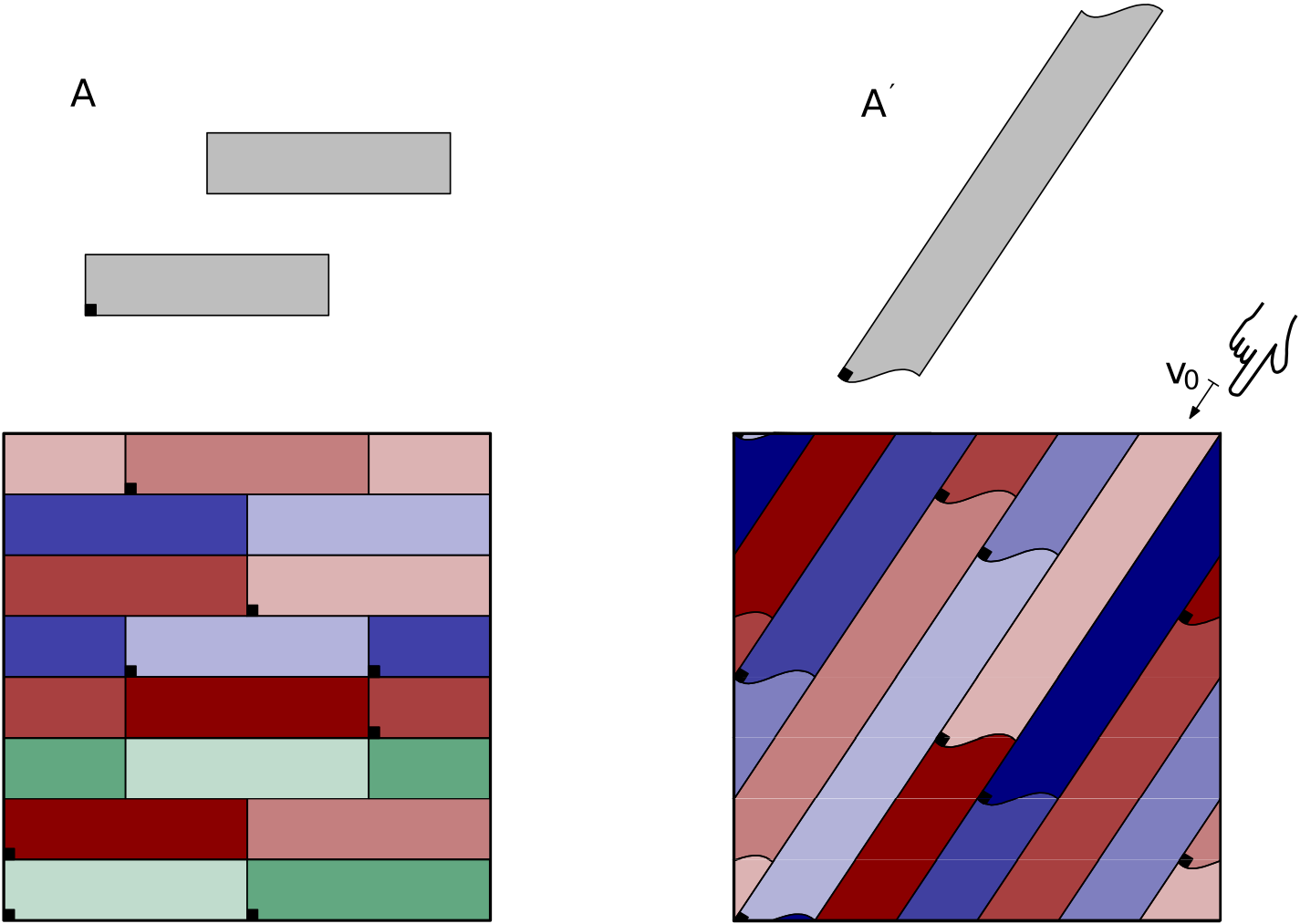}
    \caption{\\
    Left: A measurable tiling of $\T^2$ (depicted here using the fundamental domain $[0,1)^2$) by a disconnected tile $A$ by a set $F^0 = F^0_1 \cup F^0_2 \cup F^0_3$ defined in \cref{ex:disconnected} and denoted here by black squares. 
    The shifts $F^0_1, F^0_2, F^0_3$ generate green, blue, and red tiles.\\
    Right: A measurable tiling of $\T^2$ by an open and connected tile $A'$ by a set $F'^1 = F'^1_1 \cup F'^1_2$ denoted here by black squares. This time, the set $F'^1_2$ generating the red tiles is not necessarily a subset of $\Q^2$. However, sliding all red tiles in the direction of a vector $v_0$ (moving in the direction of the finger), we may enforce that the new coordinate set $F'^0_2$ is rational. }
    \label{fig:two_examples}
\end{figure}

\begin{example}[Two dimensions, disconnected] \label{ex:disconnected}
Let $d=2$ and $A$ be the set
$$ A \coloneqq \left( (0,1/2) \times (0,1/8) \;\cup\; (1/4,3/4) \times (1/4,3/8) \right) \hbox{ mod } \Z^2,$$
which is a disconnected open subset of $\T^2$; see the left half of \cref{fig:two_examples}.  If we define $F^0 \coloneqq F^0_1 \cup F^0_2 \cup F^0_3$ with
\begin{align*}
F^0_1 &\coloneqq \{ (0, 0), (1/2, 0) \} \\
F^0_2 &\coloneqq \{ (1/4, 1/2), (3/4,1/2) \} \\
F^0_3 &\coloneqq \{ (0,1/8), (3/4,3/8), (1/2,5/8), (1/4,7/8)\}
\end{align*}
and set $v_0 \coloneqq (1,0)$, then we have a measurable tiling $F^0 \oplus A =_\ae \T^2$, with each $F^0_i \oplus A$ being $\R v_0$-invariant; see the left half of \cref{fig:two_examples}.  If we then let $\alpha_1,\alpha_2,\alpha_3$ be arbitrary real numbers and set $F \coloneqq F_1 \cup F_2 \cup F_3$ where
$$ F_i \coloneqq F^0_i + \alpha_i v_0$$
for $i=1,2,3$, we see that we also have a measurable tiling $F \oplus A =_\ae \T^2$, which was obtained from $F^0$ by giving the tiles in $F^0_i$ a velocity of $\alpha_i v_0$ and then moving the tiles for a unit amount of time. 
Note that the set $F_3 \hbox{ mod } \Z^2$ is not contained in a single coset of $\R v_0 \hbox{ mod } \R^2$.
\end{example}

\begin{example}[Two dimensions, connected]  Let $d=2$ and $A \coloneqq (0,1/2)^2 \hbox{ mod } \Z^2$; this is an open connected set.  Let $\alpha$ be an irrational number.  Then the set
$$ F \coloneqq \{ (0,0), (0+1/2,0), (\alpha,1/2), (\alpha+1/2,1/2)\}$$
generates a measurable tiling $F \oplus A =_\ae \T^2$ of the torus $\T^2$.  If for every real $t$ we set
$$ F^t = \{ (0,0), (1/2,0), (t,1/2), (t+1/2,1/2)\}$$
then $F^t \oplus A = _\ae \T^2$ is a measurable tiling for every real number $t$, which is rational when $t=0$.  Also, if we set $v_0 = (1,0)$, and partition $F = F^1$ into
$$ F_1 = \{ (0,0), (1/2,0)\}; \quad F_2 = \{(\alpha,1/2), (\alpha+1/2,1/2)\}\},$$
then we can give the elements of $F_1$ a zero velocity, and the elements of $F_2$ a velocity of $\alpha v_0$, and the sets 
$$ F_1 \oplus A =_\ae \T \times (0,1/2) ; \quad
F_2 \oplus A =_\ae  \T \times (1/2,1)$$
are $\R v_0$-invariant; informally, this means that one can independently ``slide'' the sets $F_1, F_2$ along the direction $v_0$ without destroying the measurable tiling property. Note that $F_1 \hbox{ mod } \Z^2$ and $F_2 \hbox{ mod } \Z^2$ both lie on cosets of $\R v_0 \hbox{ mod } \Z^2$.

A more complicated example of a connected tile in two dimensions is depicted on the right-hand side in \cref{fig:two_examples}. 
\end{example}

\begin{example}[Three dimensions]\label{three-dim}  Let $d=3$ and $A \coloneqq [0,1/2]^3 \hbox{ mod } \Z^3$.  Let $\alpha,\beta,\gamma$ be irrational numbers.  For every real number $t$, set
\begin{align*}
     F^t \coloneqq \{ 
     &(0, t\alpha, 0), (0, t\alpha+1/2, 0),\\
     &(1/2, 0, t\beta), (1/2, 0, t\beta+1/2),\\
     &(t\gamma, 1/2, 1/2), (t\gamma+1/2, 1/2, 1/2),\\
     &(0, 0, 1/2), (1/2, 1/2, 0)\}.
\end{align*}One can then verify that $F^t \oplus A =_\ae \T^3$ for all real $t$ (see, \cref{fig:cube}).  In particular, one can ``slide'' the irrational tiling $F = F^1$ into the rational tiling $F^0$ without destroying the tiling property, with the elements of $F$ being given velocities proportional to $(1,0,0), (0, 1, 0)$ and $(0,0,1)$, respectively. 
\end{example}

\begin{figure}
    \centering
    \includegraphics[width = .4\textwidth]{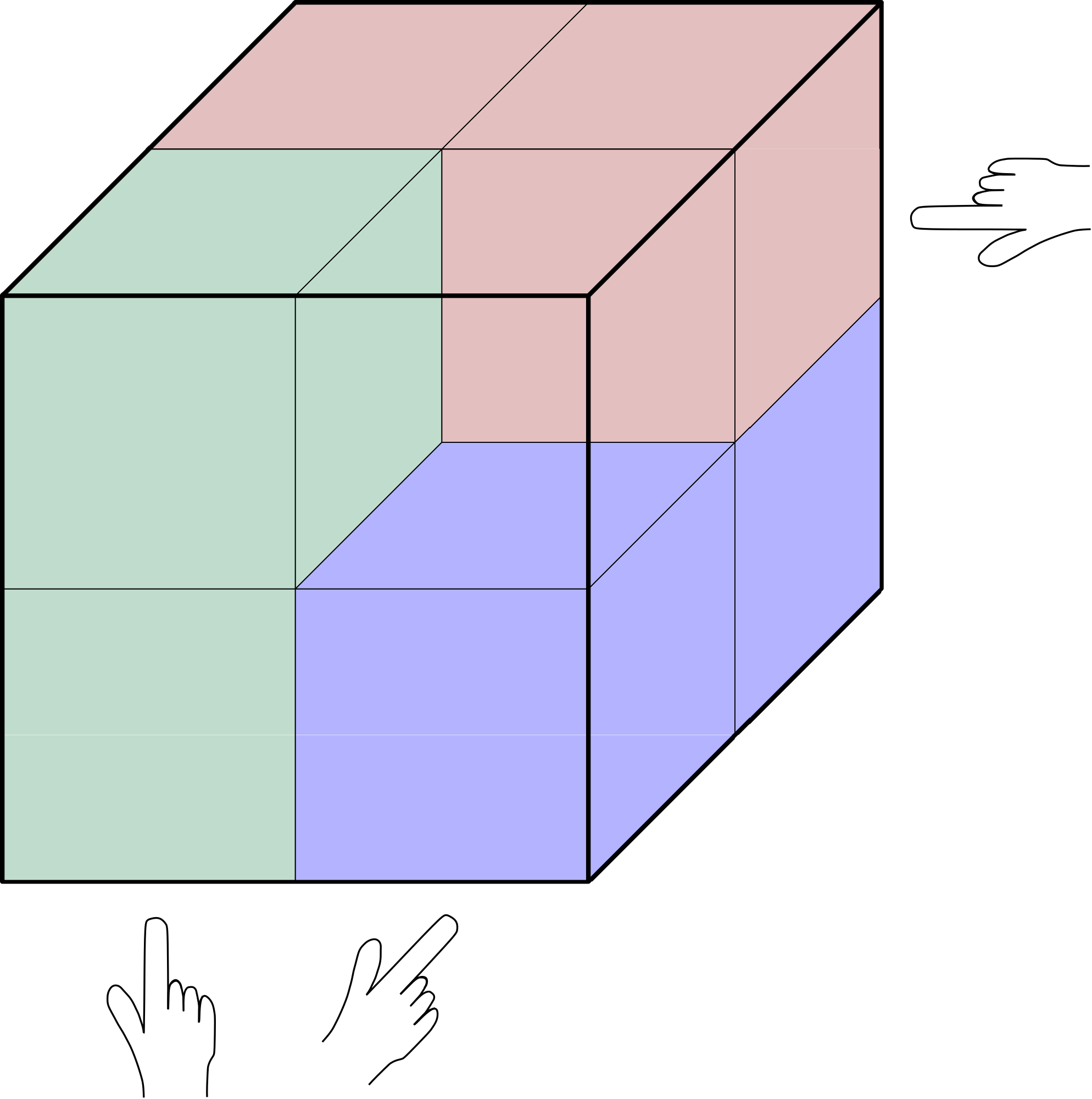}
    \caption{A measurable tiling of $\T^3$ by eight cubes $[0,1/2)^3$ from \cref{three-dim}. Only six cubes are colored; the green, blue, and red boxes are shifted in the direction $(1, 0, 0), (0, 1, 0), $ and $(0, 0, 1)$. }
    \label{fig:cube}
\end{figure}

\section{A measurable dilation lemma}\label{sec:dilation}

In this section we establish \cref{structure-gen}(i).  Our arguments here will be a modification\footnote{In the model case $\Gamma=\Z^d$, one can in fact derive \cref{structure-gen}(i) directly from \cite[Lemma~3.1]{GreenfeldTao} by applying that lemma to the sets $\{ \gamma \in \Z^d: \gamma \cdot x \in A \}$, which form a tiling of $\Z^d$ by $F$ for almost every $x \in X$; we leave the details of this argument to the interested reader.} of those used to establish \cite[Lemma~3.1]{GreenfeldTao}.

It will be convenient to introduce the language of convolutions.
Let $L^0(X)$ denote the space of measurable functions $f \colon X \to \R$, up to almost everywhere equivalence, and let $\R \Gamma$ denote the group ring of $\Gamma$ over $\R$, which we write as the space of finitely supported functions $w \colon \Gamma \to \R$ from $\Gamma$ to the reals.  
With this representation, the multiplication operation on $\R \Gamma$ becomes the usual convolution operation $*$:
$$ w_1 * w_2(\gamma) \coloneqq \sum_{\gamma' \in \Gamma} w_1(\gamma') w_2( (\gamma')^{-1} \gamma);$$
note that only finitely many of the summands are non-zero.  This operation is bilinear and associative, and it is commutative whenever $\Gamma$ is abelian. 
We can also define the convolution $w * f$ of an element $w \in \R \Gamma$ of the group ring and a function $f \colon X \to \R$ by the formula
$$ w*f(x) \coloneqq \sum_{\gamma \in \Gamma} w(\gamma) f(\gamma^{-1} x);$$
again, only finitely many summands are non-zero, and from the invariance of $\mu$ we see that if $f$ is only given up to $\mu$-almost everywhere equivalence then $w*f$ is also well-defined up to $\mu$-almost everywhere equivalence.  Thus the convolution $w*f \in L^0(X)$ is also well-defined for $w \in \R \Gamma$ and $f \in L^0(X)$.  The ring $\R\Gamma$ can easily be seen to act on $L^0(X)$; in particular, we have
$$ (w_1*w_2)*f = w_1 * (w_2*f)$$
for all $w_1,w_2 \in \R\Gamma$ and $f \in L^0(X)$.

Note that if $F$ is a finite subset of $\Gamma$ and $A$ is a measurable subset of $X$, then $\one_F$ can be viewed as an element of $\R \Gamma$ and $\one_A$ can be viewed as an element of $L^0(X)$.  The tiling condition $F \odot A =_\ae X$ is then equivalent to the convolution identity
\begin{equation}\label{fax}
 \one_F * \one_A =_\ae \one_X
\end{equation}
holding in $L^0(X)$. 

We begin with the proof of \cref{structure-gen}(i) for $r > 0$.  We may assume that $A$ has positive measure, as the claim is trivial otherwise.  By induction and the fundamental theorem of arithmetic, it suffices to verify this claim in the case that $r$ is a prime $p$ with $p > |F|=n$, so long as we verify that $F^p$ has the same cardinality as $F$ (i.e., there are no collisions $f_1^p = f_2^p$ for distinct $f_1,f_2 \in F$).

We convolve both sides of \eqref{fax} by $p-1$ additional copies of $\one_F$, noting that $\one_F * \one_X = n \one_X$, to conclude that
$$ \one_F^{*p} * \one_A =_\ae n^{p-1} \one_X$$
in $L^0(X)$, where $\one_F^{*p}$ denotes the convolution of $p$ copies of $\one_F$.  The left-hand side is integer-valued, thus we may reduce both sides modulo $p$ and conclude from Fermat's little theorem that
\begin{equation}\label{fa1p}
\one_F^{*p} * \one_A =_\ae \one_X \hbox{ mod } p.
\end{equation}

The group algebra ${\mathbb F}_p \Gamma$ of functions $w \colon \Gamma \to {\mathbb F}_p$ is a commutative ring of characteristic $p$, and thus one has the Frobenius identity $(w_1+w_2)^{*p} = w_1^{*p} + w_2^{*p}$ in this ring for all $w_1,w_2 \in {\mathbb F}_p \Gamma$.  Writing $\one_F = \sum_{i=1}^n \delta_{f_i}$ as the sum of Kronecker delta functions, we conclude that
$$ \one_F^{*p} = \sum_{i=1}^n \delta_{f_i}^{*p} = \one_{F^p}$$
in ${\mathbb F}_p \Gamma$, where we temporarily view $F^p \coloneqq \{ f^p: f \in F\}$ as a multiset rather than a set, so that the indicator function $\one_{F^p}$ could theoretically take on values greater than one (although we shall shortly eliminate this possibility).  In other words,
$$ \one_F^{*p} = \one_{F^p} \hbox{ mod } p.$$
Since $\one_A$ is also integer-valued, we conclude that
$$ \one_F^{*p} * \one_A =_\ae \one_{F^p} * \one_A \hbox{ mod } p$$
pointwise everywhere in $X$.  Combining this with \eqref{fa1p}, we conclude that
$$ \one_{F^p} * \one_A =_\ae \one_X \hbox{ mod } p.$$
This implies that $$ \one_{F^p} * \one_A \geq _\ae \one_X.$$ Observe that
$$|F|=\one_{F} * \one_{F^p} * \one_A \geq _\ae \one_{F} * \one_X=|F|.$$ 
We conclude that
$$ \one_{F^p} * \one_A =_\ae \one_X.$$
Since $\one_X$ is bounded by $1$ and $A$ has positive measure, it is thus not possible for $\one_{F^p}$ to attain any value larger than one, and hence there are no collisions $f_1^p = f_2^p$ for distinct $f_1, f_2 \in F$.  We thus have the measurable tiling $F^p \odot A =_\ae X$, as claimed.

To conclude the proof of \cref{structure-gen}(i) for all suitable $r \in \Z$, it suffices by the first part to treat the case $r=-1$, that is to say that the translates $f_i^{-1} \cdot A, 1\leq i\leq n$ partition $X$ up to null sets. To show this, one can  adopt the arguments in \cite[Theorem 13]{szegedy1998algorithms}, \cite[Lemma 3.1]{kol} and \cite[Lemma 3.2]{GL}.
By hypothesis, we see that for any  $1\leq i<i'\leq n$,  the translates $f_i \cdot A, f_{i'} \cdot A$ are disjoint up to $\mu$-null sets; translating this by $(f_i f_{i'})^{-1}$ and using the abelian nature of $\Gamma$ we conclude that $f_{i'}^{-1} \cdot A, f_i^{-1} \cdot A$ are also disjoint up to $\mu$-null sets.  Thus   $\one_{F^{-1}}*\one_A\le_\ae \one_X$. On the other hand we have $\one_F*\one_A=_\ae\one_X$, and $|F|=|F^{-1}|=n$. Thus, using again the abelian nature of $\Gamma$, we have $$\one_F*(\one_X-\one_{F^{-1}}*\one_A)=_\ae n\one_X -\one_{F^{-1}}*(\one_F*\one_A)=_\ae 0.$$
Thus, as both $\one_F$ and $(\one_X - \one_{F^{-1}} * \one_A)$ are non-negative and $|F|>0$, we must have $$\one_X-\one_{F^{-1}}*\one_A =_\ae 0,$$
 and the claim follows.

\begin{remark}\label{non-ab} The dilation lemma fails when the group $\Gamma$ is non-abelian.  For instance, consider the group $\Gamma = \Z \times G$ for some (non-abelian) finite group $G = (G,\cdot)$ (and using the additive group law on $\Z$), acting on $X = \Gamma$ (equipped with counting measure) by left translation; one can also take $X$ to be a quotient $\Z/N\Z \times G$ of $\Gamma = \Z \times G$ if desired to ensure that $X$ has finite measure.  Let $Ha$ be some right coset of a proper subgroup $H$ of $G$, and consider the finite set $F \subset \Gamma$ defined by
$$ F \coloneqq (\{0\} \times Ha) \cup (\{1\} \times (G \backslash H)).$$
Observe that if $A \subset X$ is a set of the form
\begin{equation}\label{A-def}
A \coloneqq \{ (n,g_n): n\in \Z\}
\end{equation}
for some sequence $g_n$ of elements of $G$, then the translates $f \cdot A, f \in F$ partition $X$ if and only if one has the constraint
\begin{equation}\label{gnn}
 g_{n-1} \in H a g_n
 \end{equation}
for all $n \in \Z$.  If we take $X$ to be $\Z/N\Z \times G$ instead of $\Z \times G$, the above discussion still applies, but with $n$ now ranging in $\Z/N\Z$ rather than $\Z$.
In the non-abelian setting, one can easily construct examples\footnote{For instance, one can take $G=S_3$, $H$ to be a subgroup of $S_3$ of order two, and $a$ to be an element not in $H$.} in which $HaHaHa=G$, in which case the constraint \eqref{gnn} gives no relationship whatsoever between $g_{n+r}$ and $g_n$ for $r \ge 3$.  In particular, for such $r$ there is no dilated tile of the form
$$ F_r = (\{0\} \times E) \cup (\{r\} \times E')$$
for some non-empty $E, E' \subset G$ with the property that $F \odot A = X$ implies the $F_r \odot A = X$.  A similar analysis shows that the assertions $F \odot A = X$ and $F^{-1} \odot A = X$ are inequivalent.  This example indicates that no reasonable analogue of the dilation lemma holds in this setting.  This example also shows that non-abelian tiling problems with one tile can be ``local'' in various senses; see, for instance,  \cref{fid-counter} below for a more precise statement.
\end{remark}

\begin{remark}\label{grf}  As showed in  \cite[Lemma~3.1]{GreenfeldTao}, one can generalize the dilation lemma by requiring the tiling to be a \emph{periodic level  tiling} rather than a partition up to null sets, by which we mean that  for every $1\le k\le |F|$, the level set $\{x\colon \one_F*\one_A(x)=k\}$ is \emph{periodic} in $X$ up to $\mu$-null sets, (where here a periodic  set is a set which is $\mu$-almost everywhere invariant with respect to an action of some  lattice).  The conclusion is then that there is a number $q$ (depending on $F$ and $\one_F*\one_A$) such that if  $r=1\mod q$ then $\one_{F^r}*\one_A=_\ae \one_{F}*\one_A$,  but now we permit collisions $f_1^r = f_2^r$ to occur.  We leave the details of this generalization to the interested reader.  
\end{remark}

\begin{remark}\label{quasi-rem}   \cref{structure-gen}(i) easily extends  to the setting in which the action of $\Gamma$ is quasi-invariant rather than invariant, which means that it maps $\mu$-null sets to $\mu$-null sets.  This can be accomplished simply by replacing $\mu$ with the (non-$\sigma$-finite) measure $\tilde \mu$ defined by setting $\tilde \mu(E)$ to equal $+\infty$ when $\mu(E)>0$ and to equal zero otherwise.
\end{remark}

\section{A measurable structure theorem}\label{sec:structure}

In this section, using \cref{structure-gen}(i),  we will establish  \cref{structure-gen}(ii).  Let the hypotheses be as in that theorem.  Applying part (i) of that theorem, we see that for any integer $r$ coprime to $q$, we have
$$ F^r \odot A =_\ae X,$$
which we rewrite as the assertion that
$$\one_X =_\ae  \sum_{f \in F} \one_{f^r \cdot A}.$$
Setting $r = 1+nq$ for $n=1,\dots,N$ and averaging, we conclude in particular that
$$\one_X =_\ae    \sum_{f \in F} \frac{1}{N} \sum_{n=1}^N \one_{(f^{q})^n \cdot f \cdot A}$$
for all $N$.  By the mean ergodic theorem, for each $f \in F
$, the averages
$\frac{1}{N} \sum_{n=1}^N \one_{(f^q)^n \cdot f \cdot A}$ converge in $L^1(X)$ to a $f^q$-invariant function $\varphi_f$; since these averages all have total mass $\mu(A)$, $\varphi_f$ does also.  It is also clear from construction that if $f^q \cdot A =_\ae A$ then $\varphi_f =_\ae \one_{f\cdot A}$.  The claim follows.

\begin{remark}
As it turns out, one can replace the requirement that the measure $\mu$ be finite to merely $\sigma$-finite, and also assume that the action is only quasi-invariant rather than measure-preserving, as long as we also drop the conclusion that the $\varphi_f$ have mean $\mu(A)$; see  \cref{app}.
\end{remark}

\begin{remark}\label{grf2} Using \cref{grf}, one can also extend the above structure theorem to periodic level tilings, and in particular, to \emph{level $k$ tilings}\footnote{Higher level tilings are studied in several places in the literature; see for instance \cite{kol}, \cite{gkrs}, \cite{swee-hong}.}  (by which we mean that almost every element of $X$ lies in precisely $k$ of the translates $f\cdot A$ for some $k\le |F|$), but now replacing $\one_X$ in \eqref{onex} with $k \one_X$.  However this identity \eqref{onex} is significantly less useful in the $k>1$ case due to the gap in values between $k \one_X$ and $\one_A$, which leaves more room for the functions $\varphi_f, f\in F$ to vary (cf., \cite[Theorem~1.3(ii)]{GreenfeldTao}).
\end{remark}

\section{Factor of iid tilings}\label{sec:factor}

In this section we prove \cref{thm:tile-factor}.

We first show that (ii) implies (i).  By translating $F$ we may assume without loss of generality that $x_0=0$.  By the hypothesis (ii), there exists a subset $A_0$ of $G_0$ such that the translates $f_0+A_0$, $f_0 \in F_0$ partition $G_0$.  Next, let $\lambda(\underline{x},g_0)\in[0,1], (\underline{x},g_0) \in \Z^d \times G_0$, be the iid random variables from \cref{def:factor}, and for each $\underline{x} \in \Z^d$, let $g_0(\underline{x})$ denote the element of $G_0$ which minimizes the quantity $\lambda(\underline{x},g_0(\underline{x}))$.  Clearly $g_0$ is almost surely well-defined as a function from $\Z^d$ to $G_0$.  We then form the random set
\begin{equation}\label{eq:A}
    A \coloneqq \{ (\underline{x}, g_0(\underline{x})+a_0): \underline{x} \in \Z^d; a_0 \in A_0 \}.
\end{equation} 
It is a routine matter to verify that $F \oplus A =_\ae G$ is a factor of iid tiling.  This proves (i).

Conversely, suppose that (i) holds.  Applying a translation, we may assume without loss of generality that $F$ contains the identity $(0,0)$ of $\Z^d \times G_0$.  Let $F \oplus A = G$ be a factor of iid tiling. Let $\Phi \colon [0,1]^G \to \{0,1\}$ be the measurable function obeying \eqref{process-eq}.  Observe that $[0,1]^G$ is a probability space with product measure $dm$ and a measure-preserving action of $G$ given by the translation action
$$ x_0 \cdot (\lambda_x)_{x \in G} \coloneqq
(\lambda_{x_0+x})_{x \in G}.$$
If we define the set $\tilde A \subset [0,1]^G$ by
$$ \tilde A \coloneqq \Phi^{-1}(\{1\})$$
then from \eqref{process-eq} one easily verifies that we have the tiling
$$ F \odot \tilde A =_\ae [0,1]^G.$$
Applying \cref{structure-gen}(ii), we obtain a decomposition
\begin{equation}\label{ga} \one_{[0,1]^G} =_\ae \one_{\tilde A} + \sum_{f \in F \backslash \{(0,0)\}} \varphi_f
\end{equation}
for some non-negative $qf$-invariant functions $\varphi_f \colon [0,1]^G \to [0,1]$ of mean $m(\tilde A)$; in particular, on integrating we have
$$ 1 = |F| m(\tilde A).$$
Suppose that there exists an element $f_*$ of $F$ that is not contained in $\{0\} \times G_0$.  Then the action of $qf_*$ on $[0,1]^G$ is ergodic (this follows for instance from the Kolmogorov zero-one law, since any $qf_*$-invariant subset in $[0,1]^G$ is measurable with respect to the tail algebra of $[0,1]^G$), and hence $\varphi_{f_*}$ is almost everywhere constant; since it has mean $m(\tilde A) = 1/|F|$, we thus have $\varphi_{f_*}=_\ae 1/|F|$.  From \eqref{ga} we thus have the inequality
\begin{equation}\label{ineq}
\one_{[0,1]^G} \geq \one_{\tilde A} + \frac{1}{|F|}
\end{equation}
almost everywhere, which is absurd since $\tilde A$ has positive measure.  Thus all elements of $F$ lie in $\{0\} \times G_0$, and so we may write $F = \{0\} \times F_0$ for some $F_0 \subset G_0$.

By hypothesis, there is a tiling $A$ of $\Z^d \times G_0$ by $\{0\} \times F_0$.  This implies that the set $A_0 \coloneqq \{ a_0: (0,a_0) \in A \}$ is a tiling of $G_0$ by $F_0$, giving (ii).
This completes the proof of \cref{thm:tile-factor}.

\begin{remark}
In the case when tiling a finitely generated abelian group with a tile which does not contain any non-trivial element of finite order (e.g.,  tiling $\Z^d$ with a non-trivial tile),   Theorem \ref{structure-gen} implies a stronger conclusion saying that the spectral measure of the tiling is supported on a finite union of subtorii; in particular, the tiling in this case is not weak-mixing in some directions. We thank the referee for this observation.
\end{remark}

\subsection{Some counterexamples}\label{fid-counter}
\subsubsection{Tiling by multiple tiles and tilings in non-abelian groups} In \cref{23-tile}, an example was given showing that \cref{thm:tile-factor} breaks down once two or more tiles are present.  
 We now give a modification of this example that shows that \cref{thm:tile-factor} also breaks down when the group $G$ is non-abelian.

Indeed, let $H$, $a$, $\Z \times G$, and $F$ be as in \cref{non-ab}, with $HaHaHa=G$.  We arbitrarily place  total ordering $<$ on $G$.  Despite the fact that the tile $F$ is not contained in a single fiber $\{x\} \times G$ of $\Z \times G$, one can construct a factor of iid tiling of $\Z \times G$ by $F$ by the following modification of the construction in \cref{23-tile}.

\begin{itemize}
    \item[(i)]  First we generate jointly independent random variables $\lambda(\underline{x},g) \in [0,1]$ for all $\underline{x} \in \Z$ and $g \in G$.  Then set $\underline{\lambda}(\underline{x}) \coloneqq \min_{g \in G} \lambda(\underline{x},g)$.
    \item[(ii)]  Similarly as in \cref{23-tile}, we construct the random set
    $$ S \coloneqq \{ \underline{x} \in \Z: \underline{\lambda}(\underline{x}) < \underline{\lambda}(\underline{x}-2),\underline{\lambda}(\underline{x}-1),\underline{\lambda}(\underline{x}+1),\underline{\lambda}(\underline{x}+2) \} \subset \Z.$$
    We enumerate $S=\{s_n\colon n\in \Z\}$ by order (i.e., $s_n<s_m$ if $n<m$).
    Observe that this set is almost surely unbounded both above and below, and that $s_{n+1} - s_n \geq 3$ for any two consecutive elements of $S$.
    \item[(iv)]  For each $s_n \in S$, we define $g_{s_n} \in G$ to be the element of $G$ that maximizes $\lambda(s_n, g)$, $g\in G$; this is almost surely well-defined.
    \item[(v)]  If $s_n, s_{n+1}$ are consecutive elements of $S$, we define $g_{s_n+j} = a^{-j} g_{s_n}$ for $1 \leq j \leq s_{n+1}-s_n-3$, and then define $g_{s_{n+1}-1} = b_0 g_{s_{n+1}-2}=b_0 b_1 g_{s_{n+1}-3}$, where $b_0, b_1$ is the lexicographically minimal pair of elements of $a^{-1} H$ such that
    $$ b_0b_1 g_{s_{n+1}-3} \in Ha g_{s_{n+1}}$$
    (such a pair exists since $HaHaHa = G$). Note from construction that for all $x \in \Z$, $g_{x}$ is now almost surely well-defined and obeys \eqref{gnn}. 
    \item[(vi)]  Finally, we let $A \subset \Z \times G$ be the set defined by \eqref{A-def}.
\end{itemize}

It is then a routine matter to verify that $F \odot A = \Z \times G$ is a (non-abelian) factor of iid tiling of $\Z \times G$ by $F$.  This shows that \cref{thm:tile-factor} breaks down once the group is non-abelian.

\subsubsection{Higher level tilings} Observe that \cref{thm:tile-factor} also breaks down once one considers tilings of level higher than one; in this setting there are (as noted in \cref{grf,grf2}) analogues of the dilation lemma and structure theorem, but the analogue of the inequality \eqref{ineq} no longer generates a contradiction.  Indeed, since $\one_F * \one_G = |F| \one_G$, every finite tile $F$ trivially has a factor of iid tiling of level $|F|$. In the latter example the tiling has entropy zero.

When $G=\Z^d$, any $k$-level factor of iid tiling has entropy zero\footnote{We thank the referee for this observation.}. Indeed, suppose that $A$ is a level $k$ factor of iid tiling of $\Z^d$ by $F$. A higher level version of \cref{structure-gen} (see \cref{grf2}) will give the generalization of \eqref{ga}:
\begin{equation}\label{ga2}  k\one_{[0,1]^{\Z^d}} =_\ae \one_{\tilde A} + \sum_{f \in F \backslash \{0\}} \varphi_f
\end{equation} 
where for  every $f \in F \backslash \{0\}$, $\varphi_f\colon \Z^d\to [0,k]$ is measurable $|F|f$-invariant and has mean $m(\tilde A) = k/|F|$. On the other hand, if  $f\neq 0\in F$, then, by the Kolmogorov zero-one law,  $\varphi_{f}$ is almost everywhere constant, thus $\varphi_{f}=_\ae k/|F|$.   
From \eqref{ga2} we thus have 
\begin{equation}
k\one_{[0,1]^{\Z^d}} =_\ae \one_{\tilde A} + \frac{k(|F|-1)}{|F|}
\end{equation}
which implies $k=|F|$ and $A=\Z^d$.

However, when $G=\Z^d\times G_0$ and $G_0$ is not trivial, there are non-vertical sets that admit \emph{non-trivial} factor of iid tilings of level higher than one; for instance, let $d\geq 1$, $k>1$,  $S$ be a subset of $\Z^d$ of cardinality $k$, and $G_0$ be some finite abelian group, then the set $S\times G_0$ admits a non-trivial (positive entropy) factor of iid tiling of level $k$ of $\Z^d\times G_0$ (to show this, one can adapt our construction \eqref{eq:A}, with $A_0=G_0$).

\section{Measurable tilings of a torus}

We now prove \cref{thm:torus}.  We begin with some easy consequences of \cref{structure-gen}:

\begin{lemma}[Initial properties]\label{init}  Let $F \oplus A =_\ae \T^d$ be a measurable tiling of a torus $\T^d$ by a finite set $F \subset \R^d$ and a measurable set $A \subset \T^d$. We normalize $0 \in F$.
\begin{itemize}
    \item[(i)]  (Weak rationality) For every $f \in F$, there exists $k \in \Z^d \backslash \{0\}$ such that $k \cdot f \in \Z$.
    \item[(ii)]  (Weak structure)  Up to sets of measure zero, one can write
    $$ \bigcup_{f \in F \cap \Q^d} (f + A) = \bigcap_{f \in F \backslash \Q^d} A_f$$
    where for each $f \in F \backslash \Q^d$, $A_f$ is a $qf$-invariant measurable subset of $\T^d$ for some natural number $q$ with $0 < \mu(A_f) < 1$.
\end{itemize}
\end{lemma}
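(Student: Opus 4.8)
The plan is to derive both parts from the decomposition of \cref{structure-gen}(ii), applied with $\Gamma=\R^d$ acting on $\T^d$ by translation. Write $n\coloneqq|F|$, so that $\mu(A)=1/n$ (translates of $A$ have equal Haar measure and $F\oplus A=_\ae\T^d$); the case $n=1$, where $F=\{0\}$ and $A=_\ae\T^d$, is trivial, so assume $n\ge2$. First I would fix a natural number $q$ that is a multiple of $n$ \emph{and} satisfies $qf\in\Z^d$ for every $f\in F\cap\Q^d$ (for instance, $q$ equal to $n$ times a common denominator of the coordinates of the rational elements of $F$). The derivation of \cref{structure-gen}(ii) in \cref{sec:structure} applies verbatim with this $q$ in place of $|F|$, since every integer $\equiv1\pmod q$ is coprime to $n$ (as $n\mid q$) and so \cref{structure-gen}(i) still governs the dilations used in the averaging; via the mean ergodic theorem it produces measurable functions $\varphi_f\colon\T^d\to[0,1]$, $f\in F$, with $\one_{\T^d}=_\ae\sum_{f\in F}\varphi_f$, each $\varphi_f$ being $qf$-invariant, $\int_{\T^d}\varphi_f\,d\mu=\mu(A)=1/n$, and $\varphi_f=_\ae\one_{f+A}$ whenever $f^q\cdot A=_\ae A$. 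Since $qf\in\Z^d$ acts trivially on $\T^d$ for $f\in F\cap\Q^d$, the last clause yields $\varphi_f=_\ae\one_{f+A}$ for every $f\in F\cap\Q^d$; in particular $\varphi_0=_\ae\one_A$.

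For (i), the conclusion is immediate for $f=0$ and, after clearing denominators, for every $f\in F\cap\Q^d$. Suppose towards a contradiction that some $f\in F$ has $k\cdot f\notin\Z$ for all $k\in\Z^d\setminus\{0\}$ (necessarily $f\ne0$). Then translation by $qf$ on $\T^d$ is ergodic for Haar measure — by the standard Fourier-coefficient computation, using $k\cdot(qf)=(qk)\cdot f\notin\Z$ for $k\ne0$ — so the $qf$-invariant function $\varphi_f$ equals its mean $1/n$ almost everywhere. Combining this with $\varphi_0=_\ae\one_A$, nonnegativity of the $\varphi_g$, and $\sum_{g\in F}\varphi_g=_\ae1$ forces $\one_A\le_\ae1-1/n$, contradicting $\mu(A)=1/n>0$.

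For (ii), I would put $A_f\coloneqq\{x\in\T^d:\varphi_f(x)=0\}$ for each $f\in F\setminus\Q^d$; this is measurable and $qf$-invariant because $\varphi_f$ is. Using nonnegativity of the $\varphi_g$ and $\sum_{g\in F}\varphi_g=_\ae1$,
\begin{align*}
\bigcap_{f\in F\setminus\Q^d}A_f &=_\ae \left\{x:\sum_{f\in F\setminus\Q^d}\varphi_f(x)=0\right\} =_\ae \left\{x:\sum_{f\in F\cap\Q^d}\varphi_f(x)=1\right\}\\
&=_\ae \left\{x:\sum_{f\in F\cap\Q^d}\one_{f+A}(x)=1\right\},
\end{align*}
and since the translates $f+A$ ($f\in F$) are pairwise disjoint up to null sets, $\sum_{f\in F\cap\Q^d}\one_{f+A}$ is $=_\ae$ the indicator of $\bigcup_{f\in F\cap\Q^d}(f+A)$; hence the last set equals $\bigcup_{f\in F\cap\Q^d}(f+A)$ up to null sets, which is the asserted identity. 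For the measure bounds: $\int_{\T^d}\varphi_f\,d\mu=1/n$ and $\varphi_f\le1$ give $\mu(\{\varphi_f>0\})\ge1/n$, so $\mu(A_f)\le1-1/n<1$; and $A_f$ contains $\bigcap_{g\in F\setminus\Q^d}A_g$, which by the identity just proved equals $\bigcup_{g\in F\cap\Q^d}(g+A)\supseteq A$ up to null sets, so $\mu(A_f)\ge\mu(A)=1/n>0$.

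The single delicate point, and the step I would be most careful about, is the choice of $q$: \cref{structure-gen}(ii) is stated with $q=|F|$, and with that value one cannot in general conclude $\varphi_f=_\ae\one_{f+A}$ for a rational shift $f$ — unless $|F|f\in\Z^d$, since otherwise $\varphi_f$ is merely the average of $\one_{f+A}$ over the finite orbit of $f^{|F|}$ — which is why the structure-theorem computation must be re-run with $q$ divisible by those denominators. Everything else is a routine combination of the identity $\sum_{f\in F}\varphi_f=_\ae\one_{\T^d}$ with the ergodicity criterion for torus rotations.
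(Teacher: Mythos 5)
Your proposal is correct and follows essentially the same route as the paper: apply \cref{structure-gen}(ii) with a modulus $q$ adapted so that $qf\in\Z^d$ for rational $f$ (the paper's $\tilde q$, divisible by all primes up to $|F|$, plays the same role as your multiple of $|F|$), use ergodicity of the rotation by $qf$ (Weyl) to force $\varphi_f=_\ae 1/|F|$ and contradict $\mu(A)>0$ for (i), and take $A_f$ to be the zero set (complement of the support) of $\varphi_f$ for (ii). The one "delicate point" you flag — that the structure theorem must be re-run with the enlarged $q$ so that $\varphi_f=_\ae\one_{f+A}$ for rational $f$ — is exactly how the paper handles it.
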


\begin{proof}  We begin with (i).  From \cref{structure-gen}(ii) we have a decomposition
$$ \one_{\T^d} = \one_A + \sum_{f \in F \backslash \{0\}} \varphi_f$$
where for each $f \in F \backslash \{0\}$,  $\varphi_f \colon \T^d \to [0,1]$ is a measurable function of mean $\mu(A) = 1/|F|$ which is $qf$-invariant for some natural number $q$.  Now if $f \in F \backslash \{0\}$ is such that $k \cdot f \not \in \Z$ for all $k \in \Z^d \backslash \{0\}$, then by the Weyl equidistribution theorem, the action of $qf$ is ergodic, thus $\varphi_f$ is almost everywhere equal to a constant, which must be $1/|F|$.  Thus in this case we have the inequality
$$ \one_{\T^d} \geq \one_A + \frac{1}{|F|}$$
almost everywhere, which is a contradiction since $A$ has positive measure.  This proves (i).

Now we prove (ii).  Let $\tilde q$ be a natural number divisible by all primes less than or equal to $|F|$ such that $\tilde qf = 0 \hbox{ mod } \Z^d$ for all $f \in F \cap \Q^d$.  From \cref{structure-gen}(ii) we have
\begin{equation}\label{con}
 \one_{\T^d} = \sum_{f \in F \cap \Q^d} \one_{f + A} + \sum_{f \in F \backslash \Q^d} \tilde \varphi_f
 \end{equation}
 where for each $f \in F \backslash \Q^d$,  $\tilde \varphi_f \colon \T^d \to [0,1]$ is a measurable function which is $\tilde qf$-invariant and has positive mean.  If we let $A_f$ denote the complement of the support of $\tilde \varphi_f$, then $A_f$ is also $\tilde qf$-invariant and has measure less than $1$.  Note that, up to sets of measure zero, $\sum_{f \in F \backslash \Q^d} \tilde \varphi_f$ vanishes precisely on $\bigcap_{f \in F \backslash \Q^d} A_f$, and $\sum_{f \in F \cap \Q^d} \one_{f + A}$ is the indicator function of $\bigcup_{f \in F \cap \Q^d} f+A$.  The claim (ii) then follows from \eqref{con} (note that none of the $A_f$ can have zero measure since $\bigcup_{f \in F \cap \Q^d} (f + A)$ has positive measure).
 \end{proof}
 
\subsection{The one-dimensional case}\label{sec:onedim}

We can now easily establish the one dimensional case (iv) of \cref{thm:torus}.   Indeed, by translating $F \subset \R$ by a constant we may assume without loss of generality that $0 \in F$.  From \cref{init}(i) we then see that every element of $F \backslash \{0\}$ is rational, and \cref{thm:torus}(iv) follows.
 
\begin{remark}\label{LW-remark}  An alternate way to prove \cref{thm:torus}(iv) is as follows.  In  \cite[Theorem~2]{Lagarias-Wang}  it was proved that if $A\subset \R$ is bounded, Lebesgue measurable and has a zero  measure boundary and if $A\oplus R=_\ae\R$ for some $R\subset \R$, then the set $R'=\mu(A)^{-1}R$ must be rational, i.e., $R'-R'=\{r'-r\colon r',r\in R'\}\subset \Q$. However, looking into the proof there, the condition that the set $A$ has boundary of measure zero is used in  order to show that any  such tiling set $R$ must be \emph{periodic}, and the rest of the argument, \cite[Theorem~6 and Section~4]{Lagarias-Wang}, does not use this assumption. Thus, under the assumption that a bounded measurable set $A$ tiles the line by a \emph{periodic} set $R$, the argument of Lagarias--Wang gives the rationality of $R'=\mu(A)^{-1}R$.  Since any measurable tiling $F \oplus A =_\ae \T$ of the torus induces a periodic measurable tiling $(F + \Z) \oplus \tilde A =_\ae \R$ of the real line by a bounded measurable set $\tilde A$ of rational measure (defined as the image of $A$ under the identification of the circle $\T$ with the $[0,1)$), we conclude \cref{thm:torus}(iv).  In particular,  the conjecture from \cite{conley_grebik_pikhurko2020divisibility_of_spheres} may also be deduced from the results in  \cite{Lagarias-Wang}.

In fact, it was shown in \cite[Theorem~6.1]{Kolountzakis-Lagarias} that any tiling of $\R$ with a bounded measurable set $A$ is periodic.
Thus, combining this result with \cite[Section~4]{Lagarias-Wang} we have that every tiling of $\R$ by a bounded measurable set is periodic and rational.\footnote{We remark that classifying bounded measurable tiles $A\subset \R$ is a notoriously difficult problem even in the case when $A$ is a finite union of intervals. See, for instance, \cite{N77,tijdeman215decomposition,CM99,LS,LL21} and the references therein.}
\end{remark}

\begin{remark}
Using \cref{thm:torus}(iv), it is possible to fully describe all measurable tilings of the circle in terms of tilings of finite cyclic groups.
Namely, given $F=\{f_1,\dots, f_n\}\subseteq \mathbb{Q}$ and assuming that $f_1=0$, we find a $g\in \mathbb{T}^1\cap \mathbb{Q}$ such that $\langle g\rangle$, the cyclic group generated by $g$, contains $F'=\{f_1\mod 1,\dots, f_n\mod 1\}$ and is minimal with respect to set inclusion; for instance one can take $g = \frac{1}{q} \mod 1$ where $q$ is the least common multiple of the denominators of elements from $F$.
Let $\mathcal{T}_{g,F'}$ be the set of all $A'\subseteq \langle g\rangle$ such that $F'\oplus A'=\langle g\rangle$, i.e., $\mathcal{T}_{g,F'}$ consists of all tiles of the finite cyclic group $\langle g\rangle$ using translates $F'$.
Consider the action of $\langle g\rangle$ on $\mathbb{T}^1$ induced by $x\mapsto g+x$.
As $g\in \mathbb{Q}$, we infer that orbit of each $x\in \mathbb{T}^1$ is finite, in fact, of cardinality $|\langle g\rangle|$.
It follows that there is a measurable set $X\subseteq \mathbb{T}^1$ that intersects each orbit of the $\langle g\rangle$ in exactly one point.

There is a one-to-one correspondence between measurable sets $A\subseteq \mathbb{T}^1$ that satisfies $F\oplus A=_\ae \mathbb{T}^1$ and measurable functions
$$\psi:X\to \mathcal{T}_{g,F},$$
(where $\mathcal{T}_{g,F}$ is endowed with the discrete $\sigma$-algebra) that is given as follows: the measurable set $A_\psi$, that corresponds to $\psi$, is defined as $$A_\psi=\left\{y\in \mathbb{T}^1:\exists x\in X \ y\in \psi(x)+x\right \}.$$
Similarly, given a measurable tile $A$, the function
$$\psi_A(x)=\{h\in \langle g\rangle:h+x\in A\},$$
defined for $x\in X$, is measurable and $\psi_A(x)\in \mathcal{T}_{g,F'}$ almost surely.
\end{remark}

\subsection{The two-dimensional  case}\label{sec:twodim}

We now establish \cref{thm:torus}(ii).  By \cref{init}(i) we see that for each $f \in F \backslash \Q^2$ there exists a primitive $h_f \in \Z^2 \backslash \{0\}$ such that $h_f \cdot f \in \Q$.  Note that as $f \not \in \Q^2$, $h_f$ is determined up to sign.  The key observation (which is specific to two dimensions, as \cref{three-dim} shows) is

\begin{proposition}[All shifts are parallel]\label{parallel}  For any $f_1, f_2 \in F \backslash \Q^2$, one has $h_{f_1} = \pm h_{f_2}$.
\end{proposition}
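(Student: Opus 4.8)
The plan is to argue by contradiction, leveraging the decomposition behind \cref{init}(ii) together with the fact that each ``irrational'' piece $\tilde\varphi_f$ is invariant under a whole one-parameter subtorus, not merely under a single shift. Recall from the proof of \cref{init}(ii) (see \eqref{con}) the decomposition
\[
\one_{\T^2} =_\ae \sum_{f\in F\cap\Q^2}\one_{f+A} \;+\; \sum_{f\in F\setminus\Q^2}\tilde\varphi_f,
\]
where each $\tilde\varphi_f\colon\T^2\to[0,1]$ is measurable, $\tilde qf$-invariant, and has $\int_{\T^2}\tilde\varphi_f\,d\mu=\mu(A)$. Put $B:=\bigcup_{f\in F\cap\Q^2}(f+A)$; since the translates $f+A$ with $f\in F\cap\Q^2$ are pairwise disjoint up to null sets, the first sum equals $\one_B$, so $\one_{B^c}=_\ae\sum_{f\in F\setminus\Q^2}\tilde\varphi_f$. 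As $0\in F$ we have $\mu(B)\geq\mu(A)>0$, hence $0<\mu(B^c)<1$.

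\emph{Step 1 (upgrading the invariance).} For each $f\in F\setminus\Q^2$ I would show that $\tilde\varphi_f$ is, up to null sets, invariant under the circle subgroup $\Lambda_f:=\{\,t h_f^{\perp}\bmod\Z^2 : t\in\R\,\}$ of $\T^2$, where $h_f^{\perp}$ is the primitive vector obtained from $h_f$ by a ninety-degree rotation. The set of $s\in\T^2$ with $\tilde\varphi_f(s+\cdot)=_\ae\tilde\varphi_f$ is a closed subgroup of $\T^2$ containing $\tilde qf$, hence it contains the closure $H_f$ of $\langle \tilde qf\rangle$. Since $f\notin\Q^2$, the element $\tilde qf$ has infinite order, so $H_f$ is infinite; and $H_f$ cannot be all of $\T^2$, for then $\tilde\varphi_f$ would be a.e.\ constant, equal to its mean $\mu(A)>0$, giving $\one_{B^c}\geq_\ae\mu(A)>0$, hence $\one_{B^c}=_\ae 1$ and $\mu(B)=0$, a contradiction. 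Thus $H_f$ is a proper infinite closed subgroup of $\T^2$, so its identity component is a circle subtorus. On the other hand $h_f\cdot f\in\Q$, so $h_f\cdot(n\tilde qf)\bmod 1$ takes only finitely many values; since $h_f$ is primitive, the map $x\mapsto h_f\cdot x\bmod 1$ is a continuous surjection $\T^2\to\T$ with kernel exactly $\Lambda_f$, so $H_f$ is contained in finitely many cosets of $\Lambda_f$, and hence its identity component lies in $\Lambda_f$. A circle subtorus contained in the circle $\Lambda_f$ equals $\Lambda_f$, which proves the claim.

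\emph{Step 2 (deriving the contradiction).} Suppose $h_{f_1}\neq\pm h_{f_2}$ for some $f_1,f_2\in F\setminus\Q^2$, and set $d_i:=h_{f_i}$, $\Lambda_i:=\Lambda_{f_i}$; as $d_1,d_2$ are non-parallel primitive integer vectors, $\Lambda_1\neq\Lambda_2$. Let $\Psi_i:=\sum_{f\in F\setminus\Q^2,\ h_f=\pm d_i}\tilde\varphi_f$. By Step 1, $\Psi_i$ is a non-negative measurable function invariant under $\Lambda_i$; moreover $\Psi_i\leq_\ae\one_{B^c}$ and $\int\Psi_i\,d\mu\geq\mu(A)>0$, so its support $W_i$ satisfies $0<\mu(W_i)\leq\mu(B^c)<1$. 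Being $\Lambda_i$-invariant, $W_i$ agrees up to null sets with $q_i^{-1}(\overline W_i)$, where $q_i\colon\T^2\to\T$, $q_i(x)=d_i\cdot x\bmod 1$, is the quotient map by $\Lambda_i$ and $\overline W_i\subseteq\T$ has $\mu(\overline W_i)=\mu(W_i)$. Now $W_1\cap W_2$ is invariant under both $\Lambda_1$ and $\Lambda_2$, hence under $\Lambda_1+\Lambda_2=\T^2$, so $\mu(W_1\cap W_2)\in\{0,1\}$. But $(q_1,q_2)\colon\T^2\to\T^2$ is the endomorphism given by the integer matrix with rows $d_1,d_2$, which has nonzero determinant, so it is a surjective group homomorphism and pushes Haar measure to Haar measure; therefore
\[
\mu(W_1\cap W_2)=\mu\!\left((q_1,q_2)^{-1}(\overline W_1\times\overline W_2)\right)=\mu(\overline W_1)\,\mu(\overline W_2)=\mu(W_1)\,\mu(W_2)\in(0,1),
\]
a contradiction. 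Hence $h_{f_1}=\pm h_{f_2}$.

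I expect Step 1 to be the crux. The remaining arguments are routine measure theory on $\T^2$ (invariant sets factoring through quotients, independence of sets invariant under transverse subtori, and $\T^2$-invariant sets being null or conull), whereas the upgrade from single-shift invariance to subtorus invariance rests on analysing the closure of $\langle\tilde qf\rangle$ and, crucially, on the relation $h_f\cdot f\in\Q$ confining that closure to finitely many translates of $\Lambda_f$.
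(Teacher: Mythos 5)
Your Step 1 is correct, and it is essentially the same preliminary upgrade that both of the paper's proofs perform (the paper gets $\langle h_f\rangle^\perp$-invariance of $\varphi_f$ from ergodicity of an irrational translation on that subtorus; your closed-subgroup argument is an acceptable substitute). The problem is in Step 2: the assertion that $W_1\cap W_2$ is invariant under both $\Lambda_1$ and $\Lambda_2$ is false. Each $W_i$ is invariant only under its own circle $\Lambda_i$, and the intersection of a $\Lambda_1$-invariant set with a $\Lambda_2$-invariant set is in general invariant under neither: take $W_1=\T\times E_1$ and $W_2=E_2\times\T$ with $0<\mu(E_i)<1$; then $W_1\cap W_2=E_2\times E_1$ has measure $\mu(E_1)\mu(E_2)\in(0,1)$, exactly as your independence computation predicts, and there is no zero--one dichotomy. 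So your item (b), the product formula $\mu(W_1\cap W_2)=\mu(W_1)\mu(W_2)$, is correct, but it refutes rather than supports item (a); the ``contradiction'' comes entirely from the erroneous invariance claim.

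The deeper issue is that up to this point you have used nothing beyond ``two nonnegative functions of positive, non-full support, each invariant under a transverse circle,'' and for such data the conclusion is simply false, as the example above shows. What must be used is the structural fact that these functions, together with the rational part, \emph{sum to an indicator function}: $\sum_i\Psi_i=_\ae\one_{B^c}$ takes only the values $0$ and $1$ (equivalently, the complementary product representation $\one_{Q(A)}=_\ae\prod_f \one_{A_f}$ of \cref{init}(ii)). This is precisely the input the paper's two proofs exploit: the first proof reduces the decomposition modulo $1$, kills the transverse terms with difference operators to show each grouped function $\Phi_i$ is a.e.\ linear in $h_{f_i}\cdot x$ modulo $1$, and then uses $\|\Phi_1\|_{L^\infty(\T^2)}+\|\Phi_2\|_{L^\infty(\T^2)}\le 1$ (which does use the bounded, indicator-valued sum) to force one $\Phi_i$ constant and contradict $\mu(A)>0$; the second proof combines the Fourier support information coming from the sum with the product representation and the fact that a nontrivial trigonometric polynomial cannot vanish on a set of positive measure. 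To repair your argument you would need to inject one of these inputs; for instance, with exactly two directions and no rational part one can argue directly that a $\{0,1\}$-valued function of the form $a(h_{f_1}\cdot x)+b(h_{f_2}\cdot x)$ forces $a$ or $b$ to be a.e.\ constant, but handling several directions together with the term $\sum_{f\in F\cap\Q^2}\one_{f+A}$ requires the additional machinery that the paper's proofs supply, so the missing step is not a routine patch.
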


We give two proofs of this proposition: a ``physical space'' proof inspired by the arguments in \cite{GreenfeldTao} which is based on the equidistribution theory of polynomials modulo one, and a ``Fourier analytic'' proof that exploits the fact that a non-trivial trigonometric polynomial can only vanish on a set of measure zero.

\begin{proof}[First proof]  Let $q$ be a natural number divisible by all the primes up to $|F|$, such that $qf \in \Z^2$ for all $f \in F\cap \Q^2$.  By \cref{structure-gen}(ii), we have a decomposition
\begin{equation}\label{onet2}
 \one_{\T^2} =_\ae \sum_{f \in F \cap \Q^2} \one_{f+A} + \sum_{f \in F \backslash \Q^2} \varphi_f
 \end{equation}
 where for each $f \in F \backslash \Q^2$, $\varphi_f \colon \T^2 \to [0,1]$ is $qf$-invariant and has mean $\mu(A)=1/|F|$.

For each $f \in F \backslash \Q^2$, some integer multiple $kqf$ of $f$ lies in the subtorus
$$ \langle h_f \rangle^\perp \coloneqq \{ x \in \T^2: h_f \cdot x = 0 \}.$$
Since $f \not \in \Q^2$, the translation action of $kqf$ is ergodic on this subtorus.  We conclude that $\varphi_f$ is in fact $\langle h_f \rangle^\perp$-invariant.  If we then define
$$ \Phi_1 \coloneqq \sum_{f \in F \backslash \Q^2: h_f = \pm h_{f_1}} \varphi_f$$
then $\Phi_1$ is also $\langle h_{f_1} \rangle^\perp$-invariant.  On reducing \eqref{onet2} modulo one, we have
\begin{equation}\label{ont}
0 =_\ae \Phi_1 + \sum_{f \in F_1} \varphi_f \hbox{ mod } 1,
\end{equation}
where $F_1$ consists of those $f \in F \backslash \Q^2$ with $h_f \neq \pm h_{f_1}$.
To eliminate the $\varphi_f$ terms we introduce the difference operators
$$ \partial_v g(x) \coloneqq g(x) - g(x-v)$$
for any $g \colon \T^2 \to \T$ and $v \in \T^2$.  Observe that these operators $\partial_v$ commute with each other.  If $f \in F_1$  we have
$$ \langle h_f \rangle^\perp + \langle h_{f_1} \rangle^\perp = \T^2$$
and hence if $g_f \in \T^2$  we may decompose $g_f = g'_f + g''_f$ where $g'_f \in \langle h_f \rangle^\perp$ and $g''_f \in \langle h_{f_1} \rangle^\perp$.  We then obtain the identities
$$ \partial_{g'_f} \Phi_1 =_\ae \partial_{g_f} \Phi_{1}$$
and
$$ \partial_{g'_f} \varphi_f =_\ae 0.$$
If one then applies each of the operators $\partial_{g'_f}$ in turn to \eqref{ont} for $f \in F_1$ to eliminate the $\varphi_f$ terms, we conclude that
$$ \left(\prod_{f \in F_1} \partial_{g_f}\right) \Phi_{1} =_\ae 0 \hbox{ mod } 1$$
whenever $g_f \in \T^2$.  Since $\Phi_{1}$ is $\langle h_{f_1} \rangle^\perp$-invariant, we may write
$$ \Phi_{1}(x) =_\ae \tilde \Phi_1(h_{f_1} \cdot x) \hbox{ mod } 1$$
for some measurable function $\tilde \Phi_1 \colon \T \to \T$, and then we have
\begin{equation}\label{pde}
    \partial_{\alpha_1} \dots \partial_{\alpha_{|F_1|}} \tilde \Phi_{1} =_\ae 0
\end{equation} 
for all $\alpha_1,\dots,\alpha_{|F_1|} \in \T$.  We claim that this implies that $\tilde \Phi_1$ is a linear function\footnote{See also \cite{Austin} for an extensive study on factorization of solutions to partial difference equations  in compact abelian groups (such as \eqref{pde}).}
\begin{equation}\label{nth}
\tilde \Phi_1(x) = n_1x + \theta_1 
\end{equation}
for some integer $n_1$ and $\theta_1 \in \T$, and almost all $x \in \T$.  We prove this by induction on the number $|F_1|$ of derivatives.  For $|F_1| \leq 1$, $\tilde \Phi_{1}$ is necessarily constant almost everywhere and the claim follows.  If $|F_1| > 1$, then by induction hypothesis, we see that for every $\alpha \in \T$ there exists an integer $n_\alpha$ and $\theta_\alpha \in \T$ such that
$$ \partial_\alpha \tilde \Phi_{1}(x) = n_\alpha x + \theta_\alpha$$
for almost every $x \in \T$.  As $\alpha \to 0$, the continuity of translation in the strong operator topology shows that $e^{2\pi i  \partial_\alpha \tilde \Phi_{1}}$ converges in $L^2(\T)$ to the constant $1$, and hence $n_\alpha$ must vanish for $\alpha$ sufficiently close to the origin.  Using the cocycle identity
$$ \partial_{\alpha+\beta} \tilde \Phi_{1}(x) = 
 \partial_{\alpha} \tilde \Phi_{1}(x) +  \partial_{\beta} \tilde \Phi_{1}(x-\alpha)$$
 and induction we conclude that $n_\alpha$ vanishes for all $\alpha$.  This argument also shows that the map $\alpha \mapsto \theta_\alpha$ is a continuous homomorphism from $\T$ to $\T$, and is thus of the form $\theta_\alpha = n_1\alpha$ for some integer $n_1$.  The function $\tilde \Phi_{1}(x) - n_1x$ is then almost everywhere constant (since all of its derivatives vanish almost everywhere), and the claim follows.  In particular we have
\begin{equation}\label{phi-f1}
\Phi_{1}(x) = n_1 h_{f_1} \cdot x + \theta_1 \hbox{ mod } 1
\end{equation}
for almost all $x \in \T^2$.
  
Now suppose for contradiction that $h_{f_2} \neq \pm h_{f_1}$.  If we set
$$ \Phi_{2} \coloneqq \sum_{f \in F \backslash \Q^2: h_f = \pm h_{f_2}} \varphi_f$$
then by repeating the previous arguments, we can find an integer $n_2$ and $\theta_2 \in \T$ such that
\begin{equation}\label{phi-f2}
\Phi_{2}(x) = n_2 h_{f_2} \cdot x + \theta_2 \hbox{ mod } 1
\end{equation}
for almost all $x \in \T^2$.

On the other hand, from \eqref{onet2} we have 
$$ \Phi_{1}(x) + \Phi_{2}(x) \leq 1 $$
for almost all $x \in \T^2$.  Since $\Phi_{1}$ is $\langle h_{f_1} \rangle^\perp$-invariant, and $\Phi_{2}$ is $\langle h_{f_2} \rangle^\perp$-invariant, and every coset of
$\langle h_{f_1} \rangle^\perp$ intersects every coset of $\langle h_{f_2} \rangle^\perp$, we have
$$ \| \Phi_{1} \|_{L^\infty(\T^2)} + \| \Phi_{2} \|_{L^\infty(\T^2)} = \| \Phi_{1} + \Phi_{2} \|_{L^\infty(\T^2)}  \leq 1.$$
We conclude that for some $i=1,2$ we have
$$ \| \Phi_{i} \|_{L^\infty(\T^2)} \leq 1/2.$$
Comparing this with \eqref{phi-f1} or \eqref{phi-f2} we conclude that $n_i$ must vanish, and $\Phi_{i}$ is equal almost everywhere to a constant $c_i$.  Since $\Phi_{i} \geq \varphi_{f_i}$ and $\varphi_{f_i}$ has mean $1/|F|$, we have $c_i \geq 1/|F|$. From \eqref{onet2} we then have the inequality
$$ \one_{\T^2} \geq \one_A + 1/|F|$$
almost everywhere, but this contradicts the positive measure of $A$.  Hence $h_{f_1} = \pm h_{f_2}$ as required.
\end{proof}

\begin{proof}[Second proof]  We introduce the set
$$ Q(A) \coloneqq \bigcup_{f \in F \cap \Q^2} f + A.$$
Since $Q(A)$ contains $A$ but not $f_1+A$, we have 
\begin{equation}\label{muq}
0 < \mu(Q(A)) < 1.
\end{equation}
On the one hand, from \eqref{onet2} and the arguments immediately following, we have a decomposition
\begin{equation}\label{1q1}
\one_{Q(A)} =_\ae \one_{\T^2} - \sum_{f \in F \backslash \Q^2} \varphi_f
\end{equation}
where each $\varphi_f$ is $\langle h_f\rangle^\perp$-invariant.  On the other hand, from \cref{init}(ii), we have a factorization
\begin{equation}\label{1q2}
\one_{Q(A)} =_\ae \prod_{f \in F \backslash \Q^2} \one_{A_f}
\end{equation}
where each $A_f$ is $qf$-invariant for some natural number $q$, and hence also $\langle h_f \rangle^\perp$-invariant by the arguments following \eqref{onet2}, and $0 < \mu(A_f) < 1$. To exploit these representations \eqref{1q1}, \eqref{1q2} we use the Fourier transform
$$ \hat F(k) \coloneqq \int_{\T^2} F(x) e^{-2\pi i k \cdot x}\ dx,\quad k\in\Z^2$$
defined for any $F \in L^2(\T^2)$. 

From \eqref{1q1} we see that the Fourier transform $\hat \one_{Q(A)} \in \ell^2(\Z^2)$ of $\one_{Q(A)}$ is supported on the set
$$ \bigcup_{f \in F \backslash \Q^2} \langle h_f \rangle$$
where $\langle h_f \rangle = \{ j h_f: j \in \Z \}$ is the group generated by $h_f$.
Indeed, this follows from the linearity of the Fourier transformation together with the fact that each $\hat \varphi_f$, for $f\in F\setminus \mathbb{Q}^2$ is supported on $\langle h_f \rangle $ as $\varphi_f$ is $\langle h_f\rangle^\perp$-invariant, i.e., $\varphi_f$ can only correlate with $e^{-2\pi i k \cdot x}$ where $k\in \langle h_f \rangle $, and similarly the support of $\hat \one_{\T^2}$ is $\{(0,0)\}\subset  \mathbb{Z}^2$. 
In particular, if $k \in \Z^2 \backslash \langle h_{f_1} \rangle$, then $\hat \one_{Q(A)}$ is only non-zero on finitely many elements of the coset $k + \langle h_{f_1} \rangle$. 
It follows that
$$G_{k,f_1}(x)\coloneqq\sum_{k' \in k + \langle h_{f_1} \rangle} \hat \one_{Q(A)}(k') e^{2\pi i k' \cdot x}$$
is a trigonometric polynomial.
We claim that $G_{k,f_1}$ agrees almost everywhere with the averaged function
$$ \tilde G_{k,f_1}(x) \coloneqq \int_{\langle h_{f_1} \rangle^\perp} \one_{Q(A)}(x+y) e^{-2\pi i k \cdot y}\ d\nu_{\langle h_{f_1} \rangle^\perp}(y)$$
(where $\nu_{\langle h_{f_1} \rangle^\perp}$ is Haar probability measure on $\langle h_{f_1} \rangle^\perp$).  Indeed, for $k' \in k + \langle h_{f_1} \rangle$ one computes
\begin{equation*}
    \begin{split}
        \int_{\T^2} \tilde G_{k,f_1}(x) e^{-2\pi i k' \cdot x} \ dx= & \ \int_{\T^2} \left(\int_{\langle h_{f_1} \rangle^\perp} \one_{Q(A)}(x+y) e^{-2\pi i k \cdot y}\ d\nu_{\langle h_{f_1} \rangle^\perp}(y)\right) e^{-2\pi i k' \cdot x} \ dx\\
        = & \  \int_{\langle h_{f_1} \rangle^\perp} \left(\int_{\T^2} \one_{Q(A)}(x+y) e^{-2\pi i k' \cdot (x+y)}\ dx\right) e^{-2\pi i (k-k') \cdot y} \ d\nu_{\langle h_{f_1} \rangle^\perp}(y)\\
        = & \ \int_{\langle h_{f_1} \rangle^\perp} \hat\one_{Q(A)}(k') e^{-2\pi i (k-k') \cdot y} \ d\nu_{\langle h_{f_1} \rangle^\perp}(y)\\
        = & \ \hat\one_{Q(A)}(k')
    \end{split}
\end{equation*}
by Fubini's Theorem and the fact that $(k-k')\cdot y=0$ for every $y\in \langle h_{f_1} \rangle^\perp$, and a similar computation shows that the Fourier coefficient $\int_{\T^2} \tilde G_{k,f_1}(x) e^{-2\pi i k' \cdot x} \ dx$ vanishes if $k' \not \in k + \langle h_{f_1} \rangle$.  By the Fourier inversion formula we conclude that $G_{k,f_1} =_\ae \tilde G_{k,f_1}$ as claimed.

On the other hand, from \eqref{1q2} and the $\langle h_{f_1}\rangle^\perp$-invariance of $A_{f_1}$, the function $G_{k,f_1} =_\ae \tilde G_{k,f_1}$ is supported on $A_{f_1}$ as
\begin{equation*}
    \begin{split}
        \tilde G_{k,f_1}(x)= & \ \int_{\langle h_{f_1} \rangle^\perp} \left(\prod_{f \in F \backslash \Q^2} \one_{A_f}\right) (x+y) e^{-2\pi i k \cdot y}\ d\nu_{\langle h_{f_1} \rangle^\perp}(y)\\
        = & \ \int_{\langle h_{f_1} \rangle^\perp} \one_{A_{f_1}}(x)\left(\prod_{f \in F \backslash \Q^2\cup \{f_1\}} \one_{A_f}\right) (x+y) e^{-2\pi i k \cdot y}\ d\nu_{\langle h_{f_1} \rangle^\perp}(y)\\
        = & \ 0
    \end{split}
\end{equation*}
whenever $\one_{A_{f_1}}(x)=0$.
Since $0 < \mu(A_{f_1}) < 1$, and a non-trivial trigonometric polynomial only vanishes on a set of measure zero, we conclude that $G_{k,f_1}$ vanishes whenever $k \in \Z^2 \backslash \langle h_{f_1} \rangle$.  By the Fourier inversion formula, this implies that $\hat \one_{Q(A)}$ is supported on $\langle h_{f_1} \rangle$.  If $h_{f_1} \neq \pm h_{f_2}$, then this argument shows that $\hat \one_{Q(A)}$ is supported on $\langle h_{f_1} \rangle \cap \langle h_{f_2} \rangle = \{0\}$, and hence $\one_{Q(A)}$ is constant.  But
this contradicts \eqref{muq}.  Hence we have $h_{f_1} = \pm h_{f_2}$ as claimed.
\end{proof}

With \cref{parallel} in hand, we can now complete the proof of \cref{thm:torus}(ii).  We may assume that $F \backslash \Q^2$ is non-empty, since the claim is trivial otherwise.  By \cref{parallel}, we may find an irreducible $h \in \Z^2 \backslash \{0\}$ such that $h \cdot f \in \Q$ for all $f \in F \backslash \Q^2$ (and hence for all $f \in F$).  By applying a suitable element of $\mathrm{SL}_2(\Z)$ (which does not affect the hypotheses or conclusions of \cref{thm:torus}(ii)), we may assume that $h = (0,1)$, thus we have
$$ F \subset \R \times \Q.$$
We can then cover $F$ by disjoint cosets $(\alpha_1,0) +\Q^2, \dots, (\alpha_k,0)+\Q^2$ for some real numbers $\alpha_1,\dots,\alpha_k$, whose differences are all irrational.  Of course we can assume that the intersections
$$ F_i \coloneqq F \cap ((\alpha_i,0) + \Q^2)$$
are non-empty for each $i=1,\dots,k$.

Let $f_i$ be an element of $F_i$.  Let $q$ be a natural number divisible by all primes less than or equal to $|F|$, such that $q(f-f_i) \in \Z^2$ for all $f \in F_i$.  By applying \cref{structure-gen}(ii) to the tiling $(F-f_i) \oplus A =_\ae \T^2$, we obtain a decomposition
\begin{equation}\label{1t2}
 \one_{\T^2} = \sum_{f \in F_i-f_i} \one_{f + A} + \sum_{f \in F \backslash F_i} \varphi_{f_i,f}
 \end{equation}
 where each $\varphi_{f_i,f} \colon \T^2 \to [0,1]$ is $q(f-f_i)$-invariant.  By construction, for each $f \in F \backslash F_i$, $q(f-f_i)$ lies in a coset $(\beta,0)+\Q^2$ for some irrational $\beta$, thus by the ergodic theorem $\varphi_{f_i,f}$ is invariant with respect to the action of $\R (1,0)$.  By \eqref{1t2}, we conclude that $\sum_{f \in F_i-f_i} \one_{f + A}$ is also $\R (1,0)$-invariant, and thus the set $F_i \oplus A$ is also $\R(1,0)$-invariant.
 
 If we now give each $f \in F_i$ the velocity $v_f \coloneqq (\alpha_i,0)$ for every $i=1,\dots,k$, then the (multi-)set $F^0 \coloneqq \{ f-v_f: f \in F \}$ lies in $\Q^2$, and if we then define the (multi-)set
 $$ F^t \coloneqq \{ f-v_f + tv_f: f \in F \}$$
 then we have by the $\R(1,0)$-invariance of $F_i \oplus A$ that
\begin{align*}
    \sum_{f \in F^t} \one_{f+A} &= \sum_{i=1}^k \sum_{f \in F_i} \one_{f-v_f+tv_f+A} \\
    &= \sum_{i=1}^k \one_{F_i \oplus A +((t-1)\alpha_i,0)}\\
    &=_\ae \sum_{i=1}^k \one_{F_i \oplus A}\\
    &= \sum_{f \in F} \one_{f+A}\\
    &=_\ae \one_{\T^2}
\end{align*}
and hence
$$ F^t \oplus A =_\ae \T^2.$$
Since $A$ has positive measure, this implies that the elements of $F^t$ are distinct (and so $F^t$ is a set, not just a multiset).  The claim in \cref{thm:torus}(ii) follows (with $v_0 = (1,0)$).

\begin{remark}  
Suppose that $F \oplus A =_\ae \T^d$ is a measurable tiling of the $d$-torus, $d\geq 2$, by some finite subset $F = \{f_1,\dots,f_n\}$ of $\R^d$ and some measurable subset $A$ of $\T^d$. Consider the velocities $v_i$ as in \cref{thm:torus}(i), and let  $F_{(v)} \coloneqq \{ f_i: v_i = v \}$ be  those elements of $F$ that are moving with velocity $v$.  \cref{thm:torus}(ii) gives that the sets $F_{(v)} \oplus A$ are $\R v$-invariant for every $v \in \R^2$.
In dimensions $d\ge 3$ this is no longer true. For instance, if $A=[0,1/2]^3$ and 
\begin{align*}
     F = \{ 
     &(0,0,0), (0,1/2,0),\\
     &(1/2, 0, 0), (1/2, 1/2, 0)\\
     &(\alpha,0 , 1/2), (\alpha,1/2, 1/2),\\
     &(\alpha+1/2, \beta, 1/2), (\alpha+1/2, \beta+1/2, 1/2)
     \}
\end{align*} for some irrationally numbers $\alpha,\beta$, and $F^0=\{0,1/2\}^3$. Then, if $v=(\alpha, \beta)$ or $v=(\alpha,0)$, we have that $F_{(v)}\oplus A$ is not $\R v$-invariant.

However, we do not know if there is any measurable tiling $F\oplus A=_\ae \T^d$, $d\ge 3$ that does not satisfy the following weaker analogue of \cref{thm:torus}(ii).
The velocities are replaced with piecewise linear functions $v_f:[0,1]\to \mathbb{R}^d$, for $f\in F$, such that $F^t\oplus A=_\ae \T^d$, where $F^t$ is defined analogously as in \cref{thm:torus}.
Moreover, if we set $F^t_{(v)}=\{f\in F:v'_f(t)=v\}$, for $v\in \R^d$, then $F^t_{(v)}\oplus A$ is $\R v$-invariant for every $v\in \R^d$ and $t\in [0,1]$ whenever all the derivatives exist.
In fact, we do not even know whether there exists any measurable tiling $F\oplus A=_\ae \T^d$, $d\ge 3$ such that there is no velocity $0\not= v\in \R^d$ for which there is a proper non-empty subset $F'$ of $F$ such that $F'\oplus A$ is $\R v$-invariant.
Note that the argument in \cite{Szabo} implies that if the tile $A$ is a cube then every tiling $F$ satisfies this weaker analogue of \cref{thm:torus}(ii).
On the other hand, if we are allowed to use more than one tile, then we  can construct such a tiling in $\T^3$, as follows.  Let $f \colon \T \to \R$ be the function
$$ f(x) \coloneqq \frac{\sin(\pi x)}{10}$$ 
and consider the following subsets $A_1, A_2, A_3$ of $\T^3$:
\begin{align*}
     A_1 &\coloneqq \{ (x,y,z \hbox{ mod 1}): (x,y) \in {\T}^2: f(x) \leq z < 1/3 + f(x+y) \}\\
 A_2 &\coloneqq \{ (x,y,z \hbox{ mod 1}): (x,y) \in {\T}^2: 1/3+f(x+y) \leq z < 2/3 + f(y) \}\\
 A_3 &\coloneqq \{ (x,y,z \hbox{ mod 1}): (x,y) \in {\T}^2: 2/3+f(y) \leq z < 1 + f(x) \}.
\end{align*}
It is a routine matter to verify that
$$ ((t,0,0) + A_1) \uplus ((0,t,0) + A_2) \uplus ((t,t,0) + A_3) =_\ae \T^3$$ 
for any $t \in \R$, but that none of the individual sets $A_1, A_2, A_3$ enjoy any translational symmetries.  Thus we see that there is a non-rigidity to the tiling problem
$$ (f_1+A_1) \uplus (f_2 + A_2) \uplus (f_3 + A_3) =_\ae \T^3$$
that cannot be explained purely by sliding each of the $A_1,A_2,A_3$ separately, or by translating the entire triple $A_1,A_2,A_3$ by a common shift.
\end{remark}

\subsection{The two-dimensional connected case}\label{sec:connected}

We now prove \cref{thm:torus}(iii).  The main new ingredient is the following classification of tilings of an interval by functions of connected support.

\begin{lemma}[Connected tilings of intervals]\label{connected-tiling}
Let $F$ be a finite multiset in $\R$, $[a,b]\subset\R$ be a finite interval and $\psi\colon \R\to [0,\infty)$ be a measurable function that is supported on a connected set. If
\begin{equation}\label{tiling}
    \one_F*\psi=_\ae \one_{[a,b]}
\end{equation} then there exists $m\in \N$, $c<c'$ such that $m\psi=_\ae \one_{[c,c']}$.
\end{lemma}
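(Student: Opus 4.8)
The plan is to normalize and then exploit the connectedness of $\operatorname{supp}\psi$ together with the dilation lemma from \cref{structure-gen}(i), interpreted in the finite interval / circle setting. First I would reduce to the case $[a,b]=[0,N]$ with $N$ an integer: since $\int_\R \one_F*\psi = |F|\int_\R\psi = b-a$, the ratio $(b-a)/\int\psi$ is a positive integer, and after rescaling the real line we may take $\int\psi=1$, so $b-a=|F|$. Translating, assume $[a,b]=[0,|F|]$, write $\psi$ supported on a connected set, i.e.\ an interval $I=[c,c']$ (possibly with some endpoint behavior that is irrelevant up to measure zero), and set $\ell\coloneqq c'-c=|I|$. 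The goal becomes: $\psi$ is (a constant multiple of) $\one_I$ and $\ell$ is rational with the right denominator.

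Next I would show $\psi$ is constant on its support. The idea: because $\one_F*\psi$ is the indicator of an interval, for generic points $x$ the fiber sum $\sum_{f\in F}\psi(x-f)=1$; comparing this relation at $x$ and at a nearby $x+t$ for small $t$ gives a telescoping/cancellation among shifted copies of $\psi$, and because $\operatorname{supp}\psi$ is connected, near the ``left edge'' of the tiled interval only one copy of $\psi$ can be active. More precisely, the leftmost point of $\bigcup_{f\in F}(f+I)$ must coincide with $0$, and in a right-neighborhood of $0$ exactly one translate $f_0+I$ contributes; hence $\psi$ agrees a.e.\ with the constant value $\one_{[0,\epsilon]}$ worth of mass there, i.e.\ $\psi$ is a.e.\ equal to some constant $\kappa$ on an initial segment of $I$. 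Propagating this across $I$ using the local constancy of $\one_F*\psi$ (its value jumps only at finitely many points, and between jumps each active $\psi(x-f)$ must sum to a constant) forces $\psi=_\ae \kappa\one_I$. Then $|F|\kappa\ell = |F|$ gives $\kappa\ell=1$, so $\psi=_\ae \frac1\ell\one_I$.

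It remains to show $\ell$ is of the form $p/m$ with $m\in\N$ (so that $m\psi=_\ae\one_{[c,c']}$ with $c'-c=p$ an integer), equivalently that $1/\kappa=\ell\in\Q$. Here I would invoke the dilation lemma. With $\psi=_\ae\kappa\one_I$, the tiling equation $\one_F*\one_I=_\ae\frac1\kappa\one_{[0,|F|]}$ says precisely that $F$ tiles (at constant level) the interval $[0,|F|]$ by the interval tile $I$; passing to the circle $\R/|F|\Z$ (the interval $[0,|F|]$ with endpoints identified), this becomes a measurable tiling of a torus $\T^1$ — after rescaling by $|F|$ — by the arc $I$. By \cref{thm:torus}(iv) (the one-dimensional case, already established via \cref{init}(i)), such a tiling forces the shift set $F$, and hence $I$'s length, to be rational; concretely, $\frac1{|F|}F$ normalized is rational and $|I|/|F|\in\Q$, so $\ell\in\Q$. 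Writing $\ell=p/m$ in lowest terms gives $m\psi=_\ae m\kappa\one_I=\frac{m}{\ell}\one_I=\one_{[c,c']}$ with $c'-c=p\in\N$, completing the proof.

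\emph{Main obstacle.} The delicate step is the constancy argument (middle paragraph): turning ``$\one_F*\psi$ is an indicator of an interval'' plus ``$\operatorname{supp}\psi$ connected'' into ``$\psi$ is a constant times an indicator.'' One must handle the possibility that $F$ is a genuine multiset, that several translates $f+I$ overlap in complicated ways, and that $\psi$ could a priori be a non-constant bump whose overlapping copies conspire to sum to $1$; the connectedness of the support and a careful ``edge of the tiling'' analysis (looking at the extreme points of $\bigcup_{f\in F}(f+\operatorname{supp}\psi)$ and inducting inward) is what rules this out, and making that rigorous — especially the measure-zero bookkeeping at jump points — is where the real work lies. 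Everything afterwards is a clean appeal to \cref{thm:torus}(iv).
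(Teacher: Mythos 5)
Your left-edge observation is indeed how the paper's proof begins (with one correction: near the left edge the active shift is the minimal element of $F$ counted \emph{with multiplicity} $m_1$, so the constant you get is $\kappa=1/m_1$, not the value of a single translate), but the step you yourself flag as the main obstacle — propagating constancy across the whole support — is precisely the content of the lemma, and your sketch does not supply it. ``Between jumps the active translates must sum to a constant'' does not force each translate to be constant, and nothing in your propagation actually invokes connectedness beyond writing $\operatorname{supp}\psi=[c,c']$. The paper closes this gap by a symmetric \emph{right}-edge analysis: near the right end of $[a,b]$ only the maximal shift $f_k$ (with multiplicity $m_k$) is active, and comparing the two edges forces $m_1=m_k=m$ and $\psi=1/m$ a.e.\ on an initial segment; feeding this back into the tiling identity on $[f_k,\min(f_k+\delta,1)]$ shows $\psi$ vanishes a.e.\ on an interval starting at $f_k-f_{k-1}$, and it is \emph{here} that connectedness of the support is used, to conclude $c'\le f_k-f_{k-1}$. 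That length bound means that to the right of $f_k$ only the last translate is active, whence $m\psi=_\ae\one_{[c,c']}$. Without some argument of this kind your plan has no mechanism ruling out non-constant bumps whose shifted copies conspire to sum to $1$ in the interior.

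Your final paragraph is both unnecessary and incorrect. The lemma makes no rationality claim about $c'-c$ (take $F=\{0\}$, $\psi=\one_{[0,\sqrt2]}$): the natural number $m$ in the conclusion is simply the multiplicity of the extreme elements of $F$, which the edge analysis already identifies, so there is nothing left to prove once constancy is established. Moreover, even granting your normalization $\int\psi=1$, what you would need is $\ell=1/\kappa\in\N$, not $\ell\in\Q$: writing $\ell=p/m$ gives $m\psi=(m/\ell)\one_I=(m^2/p)\one_I$, which is not $\one_I$ unless $p=m^2$. Finally, \cref{thm:torus}(iv) concerns genuine (level-one) measurable tilings of $\T$ and the rationality of the \emph{shift set}; it does not apply to the weighted identity $\one_F*\one_I=\frac1\kappa\one_{[0,|F|]}$ with $1/\kappa$ an a priori arbitrary positive real, so that appeal cannot substitute for the missing constancy argument in any case.
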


We remark that it is important here that the support of $\psi$ is connected, since the tiling
$$ \one_{\{0,1\}} * \one_{[0,1] \cup [2,3]} =_\ae \one_{[0,4]}$$
provides a counterexample in the disconnected case.
The proof of \cref{connected-tiling} follows from two observations sketched in \cref{fig:connected_tiles}. 

\begin{figure}
    \centering
    \includegraphics[width = \textwidth]{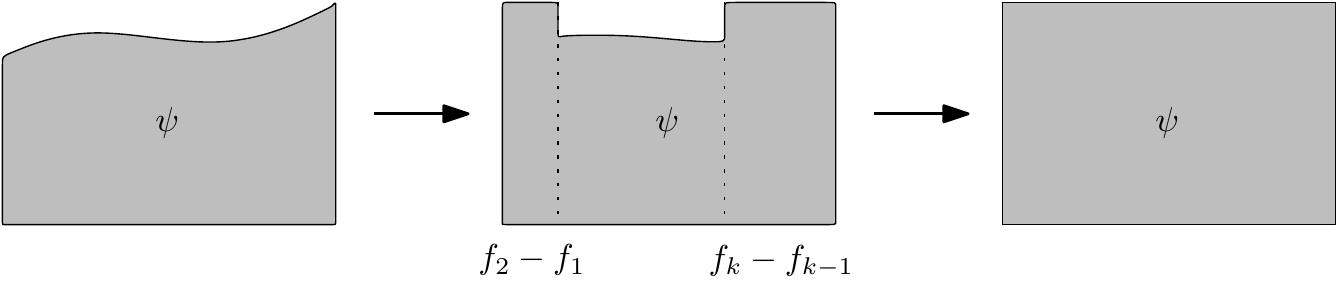}
    \caption{\cref{connected-tiling} follows from two observations. 
    First, we analyze the left and the right ``border'' of the tiling to conclude that almost everywhere on $[c, c + f_2 - f_1]$ and $[c' - (f_k - f_{k-1}), c']$, $\psi$ is equal to the same constant $1/m$ where $m$ is the multiplicity of both $f_1$ and $f_k$ in $F$. 
    We then consider the shift $f_{k-1}$ and since $\psi$ is assumed to be connected, we conclude that $c'-c \le f_k - f_{k-1}$ which in turn implies $\psi$ is constant on its whole support.   
    }
    \label{fig:connected_tiles}
\end{figure}

\begin{proof}
By translation and rescaling, we may normalize $[a,b] = [0,1]$ to be the unit interval, and also normalize $\min F = 0$.  
We enumerate the distinct elements of $F$ in order as 
$$0 = f_1<f_2<\dots<f_k$$ 
and write $$\one_F=\sum_{j=1}^k m_{j}\one_{\{f_j\}},$$ where $m_{j} \geq 1$ is the multiplicity of $f_j$ in $F$; thus
\begin{equation}\label{one-eq}
 \one_{[0,1]}(x) = \sum_{j=1}^k m_j \psi(x - f_j)
 \end{equation}
for almost every $x$. 

If $k=1$ then the claim immediately follows from \eqref{one-eq}.  Henceforth we assume $k > 1$.  Let $[c,c']$ be the support of $\psi$, then the support of $\sum_{j=1}^k m_j \psi(x - f_j)$ has infimum $c$ and supremum $f_k+c'$, thus by \eqref{one-eq} we have $c'>c=0$ and 
\begin{equation}\label{fk-range}
0 < f_k = 1-c' < 1.
\end{equation}

From \eqref{one-eq}, focusing attention in particular on the $j=1$ term on the right-hand side, we have
\begin{equation}\label{c1}
 1 \geq m_{1} \psi(x)
\end{equation}
for almost every $0 \leq x \leq 1$, with equality for almost every $0 \leq x \leq f_2$.  Focusing instead on the $j=k$ term, we have
\begin{equation}\label{ck}
 1 \geq m_k \psi(x-f_k)
\end{equation}
for almost every $0 \leq x \leq 1$, with equality for almost every $f_{k-1}+c' \leq x \leq 1$.  Combining these facts, we see that for almost every $f_k \leq x \leq \min(1, f_k+f_2)$ we have
$$ 1 \geq m_k \psi(x-f_k) = \frac{m_k}{m_1}$$
and for almost every $\max(0, f_{k-1}-f_k+c') \leq x \leq c'$ we have
$$ 1 \geq m_1 \psi(x) = \frac{m_1}{m_k}.$$
Since both of these ranges of $x$ have positive measure, we have $m_1=m_k=m$ for some natural number $m$, and
$$ \psi(x) = \frac{1}{m}$$
for almost every $0 \leq x \leq \delta \coloneqq \min(1,f_k+f_2)-f_k$.  Returning to \eqref{one-eq}, and isolating the $j=k$ term again, we see that
$$ 1 = 1 + \sum_{j=1}^{k-1} m_j \psi(x-f_j)$$
for almost all $f_k \leq x \leq \min(f_k+\delta,1)$.  In particular we have $\psi(x) = 0$ for almost all $f_k-f_{k-1} \leq x \leq \min(f_k+\delta,1) - f_{k-1}$.  Since $\psi$ is supported on $[0,c']$, this implies that $c' \leq f_k - f_{k-1}$.  Thus the functions $\psi(x-f_j)$ vanish for all $1 \leq j \leq k-1$ and almost all $x \geq f_k \geq f_j+c'$; inserting this back into \eqref{one-eq} we conclude that
$$ \one_{[0,1]}(x) = m \psi(x-f_k)$$
for almost all $x \geq f_k$.  Thus $m \psi =_\ae \one_{[0,1-f_k]}= \one_{[c,c']}$, and the claim follows.
\end{proof}

Now we can prove \cref{thm:torus}(iii).  Repeating the arguments from the preceding section, we may assume that $F \subset \R \times \Q$ is partitioned as $F_1 \cup \dots \cup F_k$, where each $F_i$ is non-empty, and of the form
$$ F_i \coloneqq F \cap ((\alpha_i,0) + \Q^2)$$
with each of the $F_i \oplus A$ being $\R (1,0)$-invariant.  

Suppose first that $k=1$, then $F=F_1$ is contained in a single coset of $\Q^2$, and hence lies in $\Q^2$ thanks to the normalization $0 \in F$.  In this case we can set all velocities equal to zero, and the claim follows.

Now suppose that $k > 1$. For each $i=1,\dots,k$, the $\R(1,0)$-invariant set $F_i \oplus A$ is equal almost everywhere to a set $\T \times I_i$, with the $I_i$ being of positive measure and partitioning $\T$. Since $A$ is open and connected, its projection to the vertical axis $\{0\} \times \T$ must then be an interval, and $I_i$ is (up to null sets) the union of finitely many translates of that interval.  In particular, each $I_i$ can be expressed as the disjoint union of finitely many intervals $I_{i,j} = [a_{i,j},b_{i,j}] \mod \Z$ in $\T$ for some $a_{i,j} < b_{i,j} < a_{i,j}+1$.
We can then partition $F_i$ into $F_{i,j}$ such that $F_{i,j} \oplus A =_\ae \T \times I_{i,j} $, or equivalently that
$$ \one_{\T \times I_{i,j}} =_\ae \one_{F_{i,j}} * \one_A.$$
On integrating out the horizontal variable we have
\begin{equation}\label{one-i}
 \one_{I_{i,j}} =_\ae \one_{\pi(F_{i,j})} * \psi
 \end{equation}
where $\pi \colon \R^2 \to \T$ is the projection homomorphism $\pi(x,y) \coloneqq y \hbox{ mod } 1$ (with $\pi(F_i)$ viewed as a multi-set), and $\psi \colon \T \to [0,+\infty)$ is the function
$$ \psi(y) \coloneqq \int_\T \one_A(x,y)\ dx.$$
Note that as $A$ is open and connected,  $\psi$ is supported on some interval supported inside some translate of $I_{i,j}$, and we can lift $\psi$ to a function $\tilde \psi\colon \R \to [0,+\infty)$ supported inside some translate of $[a_{i,j},b_{i,j}]$ so that one has a tiling
$$ \one_{[a_{i,j},b_{i,j}]} =_\ae \one_{\tilde F_{i,j}} * \tilde \psi$$
for some finite multiset $\tilde F_{i,j}$ in $\R$.  Applying \cref{connected-tiling}, we see that $\tilde \psi$ takes the form $\frac{1}{m} \one_{[c,c']}$ for some natural number $m$ and some interval $[c,c']$, which is contained in a translate of $[a_i,b_i]$ and thus has length strictly less than one; pushing back to $\T$, we conclude that $\psi = \frac{1}{m} \one_{[c,c'] \hbox{ mod } 1}$.  Inserting this into \eqref{one-i}, we see that the multi-set $\pi(F_{i,j})$ is in fact an arithmetic progression (of spacing $c'-c$) contained in a translate of $I_{i,j}$, with each element in this progression occurring with multiplicity $m$.  Thus one can partition each $F_{i,j}$ further into subsets $F_{i,j,k}$ of cardinality $m$, with each $\pi(F_{i,j,k})$ consisting of a single point $x_{i,j,k}$ with multiplicity $m$; in particular, each $F_{i,j,k} \hbox{ mod } \Z^2$ is contained in a single coset of $\R v_0 \hbox{ mod } \Z^2$, and
$$ \one_{x_{i,j,k} + [c,c']} =_\ae \one_{\pi(F_{i,j,k})} * \psi.$$
The right-hand side is the projection of $\one_{F_{i,j,k}} * \one_A$ after integrating out the horizontal variable.  Since this function is bounded  by $\one_F * \one_A =_\ae \one_{\T^2}$, we must therefore have
$$ \one_{\T  \times (x_{i,j} + [c,c'])} =_\ae \one_{F_{i,j,k}} * \one_A$$
or equivalently
$$ F_{i,j,k} \oplus A =_\ae \T \times (x_{i,j,k} + [c,c']).$$
In particular, each $F_{i,j,k} \oplus A$ is $\R v_0$-invariant.  This completes the proof.

\begin{remark}  The hypothesis that $A$ is open and connected can be relaxed to the hypothesis that $A$ is ``measurably connected'' in the following sense: for every $\xi \in \Z^2 \backslash \{0\}$, the function $\psi_\xi \colon \T \to [0,+\infty)$ defined by
$$ \psi_\xi(t) \coloneqq \int_{\xi \cdot x = t} \one_A,$$
where the integral is with respect to the Haar probability measure on $\{ x \in \T^2: \xi \cdot x=t\}$, has connected support (modulo null sets).  We leave the details of this generalization to the interested reader.
\end{remark}

\subsection{The high-dimensional case}\label{sec:highdim}

We now prove \cref{thm:torus}(i).  Let ${\mathcal T}(A) \subset (\T^d)^n$ denote the set of all tuples $(\tilde f_1, \dots, \tilde f_n) \in (\T^d)^n$ which generate a measurable tiling of the torus $\T^d$ in the sense that the translates $\tilde f_i + A$, $i=1,\dots,n$ partition $\T^d$ up to null sets, or equivalently that
$$ \sum_{i=1}^n \one_{\tilde f_i + A} =_\ae \one_{\T^d}.$$
Since translation is a continuous\footnote{This is, for instance, an immediate consequence of Lusin's theorem.} operation in the strong operator topology of (say) $L^2(\T^d)$, we see that the set ${\mathcal T}(A)$ is closed. By hypothesis,  this set ${\mathcal T}(A)$ is also non-empty; indeed, it contains the point
$$ \tilde f \coloneqq (f_1,\dots,f_n) \hbox{ mod } (\Z^d)^n.$$
Now let $q$ be the product of all the primes up to $n$.  By \cref{structure-gen}(i), we know that $(nq+1)F \oplus A =_\ae \T^d$ for all integers $n$, thus
$$ (nq+1) \tilde f \in {\mathcal T}(A).$$
We conclude that the orbit closure
$$ \overline{ \{ (nq+1) \tilde f: n \in \Z \} }$$
also lies in ${\mathcal T}(A)$.  We may write this orbit closure as
$$ \tilde f + H$$
where $H$ is the orbit closure
$$ H \coloneqq \overline{ \{ nq \tilde f: n \in \Z \} }.$$
Clearly, $H$ is a closed subgroup of the torus $(\T^d)^n$, and is thus a compact abelian Lie group.   By the classification of such groups (see e.g., \cite[Theorem 5.2(a)]{sepanski}), one can split $H = H^\circ \oplus K$, where $H^\circ$ is the identity connected component of $H$ (and thus a subtorus of $(\T^d)^n$) and $K$ is a finite subgroup of the torus $(\T^d)^n$.  In particular, $H$ is a finite union of rational cosets of $H^\circ$ (translates of $H^\circ$ by an element of $(\Q^d)^n$).
Since $q \tilde f \in H$, we conclude that $\tilde f + H$ is also a finite union of rational cosets of $H^\circ$.  In particular one has
$$ \tilde f \in \tilde f^0 + H^\circ \subset {\mathcal T}(A)$$
for some $\tilde f^0 \in (\Q^d)^n$.  One can write $H^\circ$ as ${\mathfrak h} \mod (\Z^d)^n$, where ${\mathfrak h} \leq (\R^d)^n$ is the Lie algebra of $H$ (or $H^\circ$).
Pulling back from $(\T^d)^n$ to $(\R^d)^n$, we conclude that
$$ (f_1,\dots,f_n) \in (f^0_1, \dots, f^0_n) + {\mathfrak h}$$
for some $(f^0_1, \dots, f^0_n) \in (\Q^d)^n$ with the property that
$$(f^0_1, \dots, f^0_n) + {\mathfrak h} \mod (\Z^d)^n \subset {\mathcal T}(A).$$ 
In particular, if we set
$$  (f^t_1, \dots, f^t_n) \coloneqq  (f^0_1, \dots, f^0_n) + t(v_1,\dots,v_n)$$
for $t \in \R$, where $v_i \coloneqq f_i - f^0_i$, then we have
$$ (f^t_1, \dots, f^t_n) \mod (\Z^d)^n \in {\mathcal T}(A)$$
and hence
$$ \{f^t_1, \dots, f^t_n\} \oplus A =_\ae \T^d.$$
The claims in \cref{thm:torus}(i) then follow.

\begin{remark}  The \emph{periodic tiling conjecture} \cite{GS-book, Lagarias-Wang} asserts that if one has a tiling $F \oplus A = \Z^d$ for some finite tile $F$ in $\Z^d$ and $A\subset \Z^d$, then there is also a periodic tiling $F \oplus A' = \Z^d$, thus $A'$ is a finite union of cosets of some lattice in $\Z^d$.  Currently this conjecture has only been established up to $d \leq 2$; see \cite{bhattacharya2020periodicity,GreenfeldTao}.  A variant of the argument used to prove \cref{thm:torus}(i) can establish the following partial result towards this conjecture: suppose that there is a homomorphism $T \colon \Z^d \to \T^m$ and a measurable subset $E$ of the torus $\T^m$ and a finite $F\subset \Z^d$ such that one has the measurable tiling $T(F) \oplus E =_\ae \T^m$, where an element $f$ of $\Z^d$ acts on $\T^m$ via translation by $T(f)$.  Then $F$ admits a periodic tiling $F \oplus A' = \Z^d$.  Indeed, if one defines ${\mathcal T}(F)$ to be the space of homomorphisms $\tilde T \colon \Z^d \to \T^m$ (which one can identify with $(\T^m)^d$) such that $\tilde T(F) \oplus E =_\ae \T^m$, then a variant of the above arguments shows that ${\mathcal T}(F)$ contains the orbit closure
$$ \overline{ \{ (nq+1) T: n \in \Z \} }.$$
By the above analysis, this orbit closure contains a rational point $T^0$, and by restricting the measurable tiling $T^0(F) \oplus E =_\ae \T^m$ to a generic coset of $T^0(\Z^d)$ and pulling back by $\Z^d$ one obtains a periodic tiling $F \oplus A' = \Z^d$; we leave the details to the interested reader.
\end{remark}

\bibliographystyle{alpha}
\bibliography{ref}

\appendix

\section{General structure theorem}\label{app}

In this appendix we establish

\begin{proposition}\label{sig}  \cref{structure-gen}(ii) continues to hold if the measure $\mu$ is assumed to be $\sigma$-finite rather than finite, and the action is assumed to be quasi-invariant rather than invariant, but the claim that the $\varphi_f$ has mean $\mu(A)$ is dropped.
\end{proposition}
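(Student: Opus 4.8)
The plan is to re-run the argument of \cref{sec:structure}, but to replace the appeal to the mean ergodic theorem — which needs a finite invariant measure — by a soft weak-$*$ compactness argument, and to dispose of the quasi-invariance by passing to an equivalent probability measure. Since $\mu$ is $\sigma$-finite, one first fixes a probability measure $\nu$ on $(X,\mathcal X)$ with the same null sets as $\mu$; then each $x\mapsto\gamma\cdot x$ is a nonsingular automorphism of $(X,\mathcal X,\nu)$, and a measurable tiling with respect to $\mu$ is the same thing as one with respect to $\nu$. By \cref{quasi-rem}, \cref{structure-gen}(i) is available in this generality, so $F^r\odot A=_\ae X$ for every integer $r$ coprime to $q\coloneqq|F|$. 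Writing $T_f\colon x\mapsto f^q\cdot x$ and $B_f\coloneqq f\cdot A$, specialization to $r=1+nq$ (which is coprime to $q$) gives $\one_{f^{r}\cdot A}=\one_{B_f}\circ T_f^{-n}$, and averaging over $n=1,\dots,N$ yields
$$\one_X =_\ae \sum_{f\in F}\varphi_{f,N},\qquad \varphi_{f,N}\coloneqq \frac1N\sum_{n=1}^N \one_{B_f}\circ T_f^{-n},$$
where each $\varphi_{f,N}$ is measurable with values in $[0,1]$.

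Next, all the $\varphi_{f,N}$ lie in the weak-$*$ compact convex set $\{g\in L^\infty(X,\nu):0\le g\le 1\}\subset L^1(X,\nu)^*$; since $F$ is finite, one may pass to a subnet $(N_\alpha)$ along which $\varphi_{f,N_\alpha}\to\varphi_f$ weak-$*$ for every $f\in F$ simultaneously, for some measurable $\varphi_f\colon X\to[0,1]$. Testing the displayed identity against $L^1(\nu)$ functions and passing to the limit gives $\one_X=_\ae\sum_{f\in F}\varphi_f$. To see that $\varphi_f$ is $f^q$-invariant up to null sets, note that the composition operator $h\mapsto h\circ T_f^{-1}$ is weak-$*$ continuous on $L^\infty(X,\nu)$, since its pre-adjoint $g\mapsto (g\circ T_f)\,J_f$ — where $J_f\coloneqq\frac{d\,[E\mapsto\nu(T_fE)]}{d\nu}\in L^1(\nu)$ is the Radon–Nikodym density furnished by nonsingularity, with $\int J_f\,d\nu=\nu(X)=1$ — is a bounded operator on $L^1(X,\nu)$; hence $\varphi_{f,N_\alpha}\circ T_f^{-1}\to\varphi_f\circ T_f^{-1}$ weak-$*$. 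On the other hand, the telescoping identity $\varphi_{f,N}\circ T_f^{-1}-\varphi_{f,N}=\frac1N\bigl(\one_{B_f}\circ T_f^{-(N+1)}-\one_{B_f}\circ T_f^{-1}\bigr)$ together with $\nu(X)=1$ gives $\|\varphi_{f,N}\circ T_f^{-1}-\varphi_{f,N}\|_{L^1(\nu)}\le 2/N\to 0$, so $\varphi_f\circ T_f^{-1}$ and $\varphi_f$ are weak-$*$ limits of sequences differing by an $L^1$-null amount, and therefore agree $\nu$-a.e. Finally, if $f^q\cdot A=_\ae A$ then $T_f(B_f)=_\ae B_f$, whence $\one_{B_f}\circ T_f^{-n}=_\ae\one_{B_f}$ for every $n$ and thus $\varphi_{f,N}=_\ae\one_{f\cdot A}$ for every $N$, giving $\varphi_f=_\ae\one_{f\cdot A}$. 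This establishes \cref{structure-gen}(ii) in the stated generality, with the mean assertion dropped, as that is precisely where finiteness and invariance of $\mu$ entered.

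The main obstacle is the genuine failure of $L^1$- or $L^2$-convergence of the ergodic averages $\varphi_{f,N}$ in the merely nonsingular, possibly infinite-measure setting: a nonsingular automorphism can have a large dissipative part, so neither von Neumann's nor Birkhoff's theorem is available. The point of the plan is that \cref{structure-gen}(ii) only asserts the \emph{existence} of the decomposition, not convergence of any particular averaging scheme, so a weak-$*$ subnet limit suffices; the bookkeeping with the Radon–Nikodym cocycle $J_f$ needed to push weak-$*$ convergence through $T_f^{-1}$ is routine. If sequential compactness is preferred to nets, one may first pass to a countably generated, $T_f$-invariant sub-$\sigma$-algebra containing $A$ and on which $J_f$ is measurable; I would leave this reduction and the remaining verifications to the interested reader.
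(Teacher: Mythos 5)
Your argument is correct, and it reaches the paper's conclusion by a genuinely different device at the crucial step. Both proofs begin identically: pass to an equivalent probability measure $\nu$ (so quasi-invariance and the null-set structure are preserved), invoke \cref{structure-gen}(i) via \cref{quasi-rem} for $r=1+nq$, form the averages $\varphi_{f,N}=\frac1N\sum_{n=1}^N\one_{f^{1+nq}\cdot A}$, and exploit the telescoping bound of size $2/N$ to force invariance of any limit. The divergence is in how a limit is extracted without the mean ergodic theorem. The paper applies a \emph{measurable medial mean} pointwise to the sequence $N\mapsto\varphi_{f,N}(x)$; this requires the nontrivial external input of \cref{existenceMean} (Cie\'sla--Sabok) to guarantee that the resulting function $\varphi_f=\mm\circ\psi_f$ is measurable, and it also forces the reduction to a complete measure space. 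You instead take a weak-$*$ subnet limit in $L^\infty(\nu)=L^1(\nu)^*$ via Banach--Alaoglu, which makes measurability automatic (the limit is an $L^\infty$ class from the start, so no completion step is needed), and you transport invariance through the limit by observing that $h\mapsto h\circ T_f^{-1}$ is weak-$*$ continuous, being the adjoint of the $L^1$-isometric transfer operator $g\mapsto(g\circ T_f)J_f$ built from the Radon--Nikodym cocycle of the nonsingular action; the $2/N$ telescoping estimate in $L^1(\nu)$ then kills the defect in the limit, and the special case $f^q\cdot A=_\ae A$ is handled exactly as in the paper. What your route buys is self-containedness (soft functional analysis in place of the citation to measurable medial means) at the cost of a non-canonical subnet; what the paper's route buys is a single universal averaging functional applied pointwise, which is closer in spirit to the ergodic-average proof of \cref{structure-gen}(ii). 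Your closing remark is also on point: sequential weak-$*$ compactness is not available in general because $L^1(\nu)$ need not be separable, so either the net formulation you use or the reduction to a countably generated $T_f$-invariant sub-$\sigma$-algebra is genuinely needed; as written, the net version is complete and correct.
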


In particular, we can recover \cref{structure-zd}(ii) which addresses the case of $\Z^d$ equipped with counting measure.

We begin with some easy reductions.
As every $\sigma$-finite measure can be replaced by a probability measure\footnote{For instance, if $\mu$ is a $\sigma$-finite measure and $X$ is exhausted by sets $K_n$ with $0 < \mu(K_n) < \infty$ then one can replace $\mu$ by the equivalent probability measure $\sum_{n=1}^\infty \frac{2^{-n}}{\mu(K_n)} \one_{K_n} \mu$.} in the same measure class, in particular, as the null sets are the same, we have that the action remains quasi-invariant, we may assume that $\mu$ is a probability measure.  Also we may assume without loss of generality that the measure space $X$ is complete, since otherwise one can pass to the completion and modify the $\varphi_f$ afterwards on a null set to recover measurability in the original $\sigma$-algebra.

The main new ingredient is that the use of the mean ergodic theorem is replaced by the use of a \emph{measurable medial mean}.
Recall that a medial mean $\mm$ is a linear functional from $\ell_\infty$ (the set of bounded sequences indexed by $\N$) to $\R$ which is positive, i.e., $\mm(\alpha)\ge 0$ whenever $\alpha \ge 0$, normalized, i.e., $\mm(1)=1$, and shift-invariant, i.e., $\mm(\alpha)=\mm(S(\alpha))$, where $S(\alpha)(n) \coloneqq \alpha(n+1)$.  We have the following key fact:

\begin{proposition}[Existence of measurable medial mean]\label{existenceMean} \cite[Section 3]{CieslaSabok-Hall}
Let $\nu$ be a Borel probability measure on $[0,1]^{\N}$.
Then there is a medial mean 
$$\operatorname{m} \colon \ell_\infty\to \R$$
that is $\nu$-measurable when restricted to $[0,1]^\N$ (i.e., it is measurable with respect to the completion of the Borel $\sigma$-algebra of $[0,1]^\N$ by $\nu$).
\end{proposition}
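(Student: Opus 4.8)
The plan is to realize the medial mean $\mm$ as a weak-$*$ cluster point, inside $(\ell_\infty)^*$, of a concretely chosen sequence of averaging functionals whose restrictions to $[0,1]^{\N}$ are Borel and, moreover, converge $\nu$-almost everywhere. The existence of \emph{some} medial mean is classical (Banach limits), so the entire difficulty is the $\nu$-measurability, and that is precisely what is destroyed by a Banach limit built via Hahn--Banach or a non-principal ultrafilter. First I would fix finitely supported probability vectors $w_k\colon\N\to[0,1]$, so that $\langle w_k,\alpha\rangle\coloneqq\sum_n w_k(n)\alpha(n)$ is defined for every $\alpha\in\ell_\infty$ and restricts to a \emph{continuous} function of $\alpha$ on $[0,1]^{\N}$, subject to two requirements: \textbf{(a)} asymptotic shift-invariance, $\|w_k-\tau w_k\|_{\ell^1(\N)}\to0$ as $k\to\infty$, where $\tau$ is the shift dual to $S$ (so that $\langle\tau w,\alpha\rangle=\langle w,S\alpha\rangle$); and \textbf{(b)} $\nu$-a.e.\ convergence on $[0,1]^{\N}$, i.e.\ $\langle w_k,\alpha\rangle\to h(\alpha)$ for $\nu$-almost every $\alpha$ and some Borel $h$. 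Granting \textbf{(a)} and \textbf{(b)}, I would invoke Banach--Alaoglu to extract a weak-$*$ cluster point $\mm$ of the positive, unit-norm functionals $\alpha\mapsto\langle w_k,\alpha\rangle$. Positivity and $\mm(\one)=1$ pass to $\mm$; requirement \textbf{(a)} forces $\mm\circ S=\mm$, so $\mm$ is a medial mean; and since a weak-$*$ cluster point of a sequence agrees, at each fixed $\alpha$, with $\lim_k\langle w_k,\alpha\rangle$ whenever that limit exists, requirement \textbf{(b)} yields $\mm=h$ $\nu$-a.e.\ on $[0,1]^{\N}$, whence $\mm\big|_{[0,1]^{\N}}$ is measurable for the $\nu$-completion of the Borel $\sigma$-algebra.

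The construction of the $w_k$ carries the real content, and the hard part is securing \textbf{(b)} without sacrificing \textbf{(a)}. I would start from the plain Ces\`aro functionals $c_N(\alpha)\coloneqq\frac1N\sum_{n=1}^N\alpha(n)$, which are continuous on $[0,1]^{\N}$ and asymptotically shift-invariant, but whose values need not converge $\nu$-almost everywhere. To repair this I would view $(c_N)_N$ as a sequence bounded by $1$ in $L^1(\nu)$ --- here using that $\nu$ is a probability measure --- and apply the classical theorem of Koml\'os: there is a subsequence $(c_{N_j})_j$ whose Ces\`aro averages $h_K\coloneqq\frac1K\sum_{j=1}^K c_{N_j}$ converge $\nu$-a.e.\ to some $h\in L^1(\nu)$ (which we may take Borel, e.g.\ $h=\limsup_K h_K$). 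Then I would set
\[
 w_K\ \coloneqq\ \frac1K\sum_{j=1}^{K}\frac1{N_j}\,\one_{\{1,\dots,N_j\}}\ \in\ \ell^1(\N),
\]
a finitely supported probability vector with $\langle w_K,\alpha\rangle=h_K(\alpha)$ for $\alpha\in[0,1]^{\N}$, which is exactly \textbf{(b)}. For \textbf{(a)}, note that $w_K$ is a convex combination of the F\o{}lner blocks $\frac1{N_j}\one_{\{1,\dots,N_j\}}$, each with $\ell^1$ shift-defect $2/N_j$, so (using $N_j\ge j$) $\|w_K-\tau w_K\|_{\ell^1(\N)}\le\frac2K\sum_{j=1}^K\frac1{N_j}\le\frac2K\sum_{j=1}^K\frac1j\to0$. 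That completes the construction, and with it the proof.

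In short, the obstacle is not the existence of a shift-invariant mean but the incompatibility, for the naive constructions, of shift-invariance with $\nu$-measurability; Koml\'os's theorem is the device that upgrades $L^1(\nu)$-boundedness of the Ces\`aro functionals to a.e.\ convergence of a further Ces\`aro-averaged subsequence, and --- crucially --- a Ces\`aro average of an initial segment of a subsequence of F\o{}lner blocks is again asymptotically a F\o{}lner block, so asymptotic shift-invariance is retained. Passing to a weak-$*$ cluster point then turns this $\nu$-a.e.-defined object on $[0,1]^{\N}$ into an honest functional on all of $\ell_\infty$ without spoiling either property.
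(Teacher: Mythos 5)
Your argument is correct, and it is worth noting that the paper itself does not prove \cref{existenceMean} at all: it simply cites \cite[Section 3]{CieslaSabok-Hall}, so your proposal supplies a self-contained proof where the paper relies on an external reference. The three pillars of your construction all check out. First, Koml\'os's theorem applies since the Ces\`aro functionals $c_N$ are bounded by $1$ on $[0,1]^\N$, hence bounded in $L^1(\nu)$, and it yields a subsequence whose Ces\`aro averages $h_K=\langle w_K,\cdot\rangle$ converge $\nu$-a.e.\ to a Borel function (e.g.\ $\limsup_K h_K$); since each $h_K$ depends continuously on finitely many coordinates, measurability is clear, and $\nu$-a.e.\ agreement with a Borel function is exactly what membership in the $\nu$-completion requires. (Koml\'os is slightly more than needed: Banach--Saks in $L^2(\nu)$ plus passing to a further subsequence of the weights would also do, since asymptotic shift-invariance survives passage to subsequences.) Second, the $\ell^1$ shift-defect bound $\|w_K-\tau w_K\|_{\ell^1}\le \tfrac2K\sum_{j\le K}1/N_j\to0$ is right, using $N_j\ge j$. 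Third, and this is the step that deserves the most care, your claim about the weak-$*$ cluster point is valid even though the unit ball of $(\ell_\infty)^*$ is not weak-$*$ metrizable: a cluster point $\mm$ of the sequence $(\langle w_K,\cdot\rangle)_K$ has the property that, for each fixed $\alpha$, the number $\mm(\alpha)$ is a cluster point of the real sequence $(\langle w_K,\alpha\rangle)_K$, so it equals the limit whenever the limit exists; applying this to $\alpha\geq 0$, to $\one$, and (via linearity of $\mm$) to $\alpha-S\alpha$ gives positivity, normalization, and exact shift-invariance, while applying it at $\nu$-a.e.\ $\alpha\in[0,1]^\N$ gives $\mm=h$ a.e.\ and hence the required measurability. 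So the proof is complete as written; the only caveat is that it is a reconstruction of (or substitute for) the cited Cie\'sla--Sabok argument rather than something you can compare against a proof printed in this paper.
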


We have the following simple estimate:

\begin{claim}\label{cl:convmean}
Let $\operatorname{m}$ be a medial mean and $\alpha,\beta\in \ell_\infty$ such that $\lim_{N\to\infty} \alpha(N)-\beta(N)\to 0$.
Then $\operatorname{m}(\alpha)=\operatorname{m}(\beta)$.
\end{claim}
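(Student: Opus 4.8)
The plan is to write $\alpha = \beta + \gamma$ where $\gamma \coloneqq \alpha - \beta \in \ell_\infty$ satisfies $\gamma(N) \to 0$ as $N \to \infty$, and then use linearity of $\operatorname{m}$ to reduce to showing $\operatorname{m}(\gamma) = 0$. Thus it suffices to prove: if $\gamma \in \ell_\infty$ and $\gamma(N) \to 0$, then $\operatorname{m}(\gamma) = 0$.

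First I would fix $\eps > 0$ and choose $N_0$ so that $|\gamma(n)| \le \eps$ for all $n \ge N_0$. The key point is that, by applying the shift-invariance property $\operatorname{m}(\gamma) = \operatorname{m}(S\gamma) = \dots = \operatorname{m}(S^{N_0}\gamma)$ iteratively $N_0$ times, we have $\operatorname{m}(\gamma) = \operatorname{m}(S^{N_0}\gamma)$, where $(S^{N_0}\gamma)(n) = \gamma(n + N_0)$. Now $S^{N_0}\gamma$ is a sequence all of whose entries have absolute value at most $\eps$; that is, $-\eps \cdot 1 \le S^{N_0}\gamma \le \eps \cdot 1$ pointwise (where $1$ denotes the constant sequence). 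Applying positivity of $\operatorname{m}$ to the two nonnegative sequences $\eps \cdot 1 - S^{N_0}\gamma$ and $\eps \cdot 1 + S^{N_0}\gamma$, together with linearity and the normalization $\operatorname{m}(1) = 1$, yields $|\operatorname{m}(S^{N_0}\gamma)| \le \eps$, hence $|\operatorname{m}(\gamma)| \le \eps$. Since $\eps > 0$ was arbitrary, $\operatorname{m}(\gamma) = 0$, and therefore $\operatorname{m}(\alpha) = \operatorname{m}(\beta) + \operatorname{m}(\gamma) = \operatorname{m}(\beta)$.

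There is essentially no obstacle here; the only mild care needed is to confirm that a positive linear functional on $\ell_\infty$ is automatically bounded (monotone), so that sandwiching a sequence between $\pm \eps \cdot 1$ genuinely controls its $\operatorname{m}$-value — but this is immediate from positivity applied to $\eps \cdot 1 \pm \delta$ for the sequence $\delta$ in question. The argument uses only the three defining axioms of a medial mean (positivity, normalization, shift-invariance) and not the measurability refinement from \cref{existenceMean}, which is irrelevant for this claim.
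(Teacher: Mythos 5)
Your proof is correct and follows essentially the same route as the paper's: use linearity to reduce to showing $\operatorname{m}(\gamma)=0$ for $\gamma=\alpha-\beta$, then combine shift-invariance (to discard an initial segment) with positivity and normalization (to bound by the supremum of the tail), letting the tail bound go to zero. The only cosmetic difference is that the paper first passes to $|\gamma|$ before shifting, while you sandwich $S^{N_0}\gamma$ between $\pm\eps\cdot 1$ directly; the content is identical.
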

\begin{proof}
Let $\gamma(N)=\alpha(N)-\beta(N)$.
It is enough to show that $\operatorname{m}(\gamma)=0$.
We have $-|\gamma|\le \gamma\le |\gamma|$.
By positivity and additivity of $\operatorname{m}$ this gives $\operatorname{m}(|\gamma|)\ge \operatorname{m}(\gamma)\ge -\operatorname{m}(|\gamma|)$, as we have
$$\operatorname{m}(|\gamma|)-\operatorname{m}(\gamma)=\operatorname{m}(|\gamma|-\gamma)\ge 0$$
and similarly
$$\operatorname{m}(\gamma)-\operatorname{m}(-|\gamma|)=\operatorname{m}(\gamma-(-|\gamma|))\ge 0.$$
Consequently, it is enough to show that $\operatorname{m}(|\gamma|)=0$.
We have
$$\operatorname{m}(|\gamma|)=\operatorname{m}(S^k |\gamma|)\le \operatorname{m}(\max S^k |\gamma|)=\max S^k |\gamma|\to 0$$
by shift-invariance, normality and positivity.
\end{proof}

Now we are ready to prove \cref{sig}.  From the first part of \cref{structure-gen}(i) and \cref{quasi-rem} we have, as in the proof of \cref{structure-gen}(ii), that 
$$ \one_X = \sum_{f \in F} \frac{1}{N} \sum_{n=1}^N \one_{(f^{q})^n \cdot f \cdot A}$$
$\mu$-almost everywhere for all $N$. 
For each $f \in F$, let $\psi_f \colon X\to [0,1]^{\N}$ be the measurable function
$$\psi_f \coloneqq \left(\frac{1}{N} \sum_{n=1}^N \one_{(f^{q})^n \cdot f \cdot A}\right)_{N\in \N}.$$
Note that we have
$$\sum_{f\in F}\psi_f=(1,1,\dots)$$
$\mu$-almost everywhere.

Write $\nu_f$ for the push-forward of $\mu$ via $\psi_f$, where $f\in F$.
By \cref{existenceMean} (applied to $\nu=\frac{1}{|F|}\sum_{f\in F} \nu_f$), there is a medial mean $\mm$ that is simultaneously measurable for each $\nu_f$.
Define $\varphi_f \coloneqq \mm\circ \psi_f$.
By the definition of $\mm$, we have that $\varphi_f$ are positive measurable functions that satisfy
\begin{equation}
\one_X = \sum_{f \in F} \varphi_f.
\end{equation}
It is routine to verify that
$$|\psi_f(x)(N)-\psi_f(y)(N)|\le 2/N,$$
whenever $y=f^q\cdot x$, and that shows that $\varphi_f$ is $f^q$-invariant for every $f\in F$ by \cref{cl:convmean}.  Also from construction we have $\varphi_f =_\ae \one_{f \cdot A}$ if $f^q \cdot A =_\ae A$. \cref{sig} follows.

\section*{Funding}

JG was supported by Leverhulme Research Project Grant RPG-2018-424.
RG was partially supported by the Eric and Wendy Schmidt Postdoctoral Award and by NSF grant DMS-2242871. 
VR was supported by the European Research Council (ERC) under the European Unions Horizon 2020 research and
innovation programme (grant agreement No. 853109).
TT was partially supported by NSF grant DMS-1764034 and by a Simons Investigator Award. 

\section*{Acknowledgments}

We thank Nishant Chandgotia for  drawing our attention to  the reference \cite{leptin-muller}. JG and VR thank Víťa Kala and Oleg Pikhurko for insightful discussions. RG and TT thank Tim Austin for helpful conversations and suggestions. We are also grateful to the anonymous referees for several suggestions that improved the exposition of this paper.

\end{document}